\newcommand{\klockan}{\the\hours:{\ifnum\minutes<10 0\fi}\the\minutes}
\newcommand{\tid}{\today\ \klockan}
\newcommand{\prtid}{\smash{\raise 10mm \hbox{\LaTeX ed \tid}}}
\renewcommand{\prtid}{}
\def\sectionmark#1{} 
\def\subsectionmark#1{}
\newcommand{\sectnr}{\ifnum \c@secnumdepth >\z@
                 \thesection.\hskip 1em\relax \fi}
\def\@evenhead{\footnotesize\rm\thepage\hfil\leftmark\hfil\llap{\prtid}}
\def\@oddhead{\footnotesize\rm\rlap{\prtid}\hfil\rightmark\hfil\thepage}
\def\tableofcontents{\section*{Contents} 
 \@starttoc{toc}}
\def\@biblabel#1{#1.}
\let\Thebibliography=\thebibliography
\renewcommand{\thebibliography}[1]{\def\@mkboth##1##2{}\Thebibliography{#1}
\addcontentsline{toc}{section}{References}
\frenchspacing 
\setlength{\@topsep}{0pt}
\setlength{\itemsep}{0pt}%
\setlength{\parskip}{0pt plus 2pt}%
}
\def\mdots@{\mathinner.\nonscript\!.%
 \ifx\next,.\else\ifx\next;.\else\ifx\next..\else
 \nonscript\!\mathinner.\fi\fi\fi}
\let\ldots\mdots@
\let\cdots\mdots@
\let\dotso\mdots@
\let\dotsb\mdots@
\let\dotsm\mdots@
\let\dotsc\mdots@
\def\vdots{\vbox{\baselineskip2.8\p@ \lineskiplimit\z@
    \kern6\p@\hbox{.}\hbox{.}\hbox{.}\kern3\p@}}
\def\ddots{\mathinner{\mkern1mu\raise8.6\p@\vbox{\kern7\p@\hbox{.}}%
    \raise5.8\p@\hbox{.}\raise3\p@\hbox{.}\mkern1mu}}
\def\@seccntformat#1{\csname the#1\endcsname.\quad}
\let\Enumerate=\enumerate
\renewcommand{\enumerate}{\Enumerate%
\setlength{\@topsep}{0pt}
\setlength{\itemsep}{0pt}%
\setlength{\parskip}{0pt plus 1pt}%
\renewcommand{\theenumi}{\textup{(\alph{enumi})}}%
\renewcommand{\labelenumi}{\theenumi}%
}
\let\endEnumerate=\endenumerate
\renewcommand{\endenumerate}{\endEnumerate\unskip}
\newcommand{\authortitle}[2]{\author{#1}\title{#2}\markboth{#1}{#2}}
\newcommand{\auth}[2]{{#1, #2.}}
\newcommand{\art}[6]{{\sc #1, \rm #2, \it #3 \bf #4 \rm (#5), \mbox{#6}.}}
\newcommand{\book}[3]{{\sc #1, \it #2, \rm #3.}}
\newcommand{\AND}{{\rm and }}
\newtheoremstyle{descriptive}%
  {\topsep}   
  {\topsep}   
  {\rmfamily} 
  {}          
  {\bfseries} 
  {.}         
  { }         
  {}          
\newtheoremstyle{propositional}%
  {\topsep}   
  {\topsep}   
  {\itshape}  
  {}          
  {\bfseries} 
  {.}         
  { }         
  {}          
\theoremstyle{propositional}
\newtheorem{thm}{Theorem}[section]
\newtheorem{lem}[thm]{Lemma}
\newtheorem{prop}[thm]{Proposition}
\newtheorem{cor}[thm]{Corollary}
\theoremstyle{descriptive}
\newtheorem{defi}[thm]{Definition}
\newtheorem{remark}[thm]{Remark}
\newtheorem{example}[thm]{Example}
\renewenvironment{proof}[1][\proofname]{\par
  \pushQED{\qed}%
  \normalfont
  \trivlist
  \item[\hskip\labelsep
        \itshape
    #1\@addpunct{.}]\ignorespaces
}{%
  \popQED\endtrivlist\@endpefalse
}
\newcommand{\setm}{\setminus}
{\catcode`p =12 \catcode`t =12 \gdef\eeaa#1pt{#1}}      
\def\accentadjtext#1{\setbox0\hbox{$#1$}\kern   
                \expandafter\eeaa\the\fontdimen1\textfont1 \ht0 }
\def\accentadjscript#1{\setbox0\hbox{$#1$}\kern 
                \expandafter\eeaa\the\fontdimen1\scriptfont1 \ht0 }
\def\accentadjscriptscript#1{\setbox0\hbox{$#1$}\kern   
                \expandafter\eeaa\the\fontdimen1\scriptscriptfont1 \ht0 }
\def\accentadjtextback#1{\setbox0\hbox{$#1$}\kern       
                -\expandafter\eeaa\the\fontdimen1\textfont1 \ht0 }
\def\accentadjscriptback#1{\setbox0\hbox{$#1$}\kern     
                -\expandafter\eeaa\the\fontdimen1\scriptfont1 \ht0 }
\def\accentadjscriptscriptback#1{\setbox0\hbox{$#1$}\kern 
                -\expandafter\eeaa\the\fontdimen1\scriptscriptfont1 \ht0 }
\def\itoverline#1{{\mathsurround0pt\mathchoice
        {\rlap{$\accentadjtext{\displaystyle #1}
                \accentadjtext{\vrule height1.593pt}
                \overline{\phantom{\displaystyle #1}
                \accentadjtextback{\displaystyle #1}}$}{#1}}
        {\rlap{$\accentadjtext{\textstyle #1}
                \accentadjtext{\vrule height1.593pt}
                \overline{\phantom{\textstyle #1}
                \accentadjtextback{\textstyle #1}}$}{#1}}
        {\rlap{$\accentadjscript{\scriptstyle #1}
                \accentadjscript{\vrule height1.593pt}
                \overline{\phantom{\scriptstyle #1}
                \accentadjscriptback{\scriptstyle #1}}$}{#1}}
        {\rlap{$\accentadjscriptscript{\scriptscriptstyle #1}
                \accentadjscriptscript{\vrule height1.593pt}
                \overline{\phantom{\scriptscriptstyle #1}
                \accentadjscriptscriptback{\scriptscriptstyle #1}}$}{#1}}}}
\def\itunderline#1{{\mathsurround0pt\mathchoice
        {\rlap{$\underline{\phantom{\displaystyle #1}
                \accentadjtextback{\displaystyle #1}}$}{#1}}
        {\rlap{$\underline{\phantom{\textstyle #1}
                \accentadjtextback{\textstyle #1}}$}{#1}}
        {\rlap{$\underline{\phantom{\scriptstyle #1}
                \accentadjscriptback{\scriptstyle #1}}$}{#1}}
        {\rlap{$\underline{\phantom{\scriptscriptstyle #1}
                \accentadjscriptscriptback{\scriptscriptstyle #1}}$}{#1}}}}
\newcommand{\Cpw}{C_{p,w}}
\newcommand{\grad}{\nabla}
\DeclareMathOperator{\Div}{div}
\DeclareMathOperator{\supp}{supp}
\DeclareMathOperator*{\essliminf}{ess\,lim\,inf}
\DeclareMathOperator*{\essinf}{ess\,inf}
\newcommand{\bdy}{\partial}
\newcommand{\loc}{_{\rm loc}}
\renewcommand{\emptyset}{\varnothing}
\newcommand{\A}{{\mathcal A}}
\newcommand{\U}{{\mathcal U}}
\newcommand{\LL}{{\mathcal L}}
\newcommand{\al}{\alpha}
\newcommand{\be}{\beta}
\newcommand{\Om}{\Omega}
\renewcommand{\phi}{\varphi}
\newcommand{\eps}{\varepsilon}
\newcommand{\p}{{$p\mspace{1mu}$}}
\newcommand{\R}{\mathbf{R}}
\newcommand{\C}{\mathbf{C}}
\newcommand{\K}{{\mathcal K}}
\newcommand{\vb}{\bar{v}}
\newcommand{\ut}{\tilde{u}}
\newcommand{\uP}{\itoverline{P}} 
\newcommand{\lP}{\itunderline{P}} 
\newcommand{\clOm}{\overline{\Omega}}
\newcommand{\Hp}{H^{1,p}}
\newcommand{\Hploc}{H^{1,p}\loc}
\numberwithin{equation}{section}
\newenvironment{ack}{\medskip{\it Acknowledgement.}}{}
\begin{document}
\authortitle{Anders Bj\"orn, Jana Bj\"orn  and Abubakar Mwasa}
         {Resolutivity and invariance for the Perron method for degenerate equations}
\title{Resolutivity and invariance for the Perron method for degenerate equations of divergence type}
\author{
Anders Bj\"orn\\
\it\small Department of Mathematics, Link\"oping University, SE-581 83 Link\"oping, Sweden\\
\it \small anders.bjorn@liu.se, ORCID\/\textup{:} 0000-0002-9677-8321
\\
\\
Jana Bj\"orn\\
\it\small Department of Mathematics, Link\"oping University, SE-581 83 Link\"oping, Sweden\\
\it \small jana.bjorn@liu.se, ORCID\/\textup{:} 0000-0002-1238-6751
\\ 
\\
Abubakar Mwasa \\
\it\small Department of Mathematics, Link\"oping University, SE-581 83 Link\"oping, Sweden\\
\it\small Department of Mathematics, Busitema University, P.O.Box 236, Tororo, Uganda \\
\it \small abubakar.mwasa@liu.se, a.mwasa@yahoo.com,
ORCID\/\textup{:} 0000-0003-4077-3115
}
\date{}

\maketitle

\noindent{\small
\begin{abstract}
\noindent
We consider Perron solutions to the Dirichlet problem for the
quasilinear elliptic equation $\Div\A(x,\grad u) = 0$
in a bounded
open set $\Om\subset\R^n$.
The vector-valued function $\A$ satisfies the standard ellipticity 
assumptions with a parameter $1<p<\infty$ and a \p-admissible weight $w$.
We show that arbitrary perturbations on sets of $(p,w)$-capacity zero of 
continuous (and certain quasicontinuous) boundary data~$f$
are resolutive and that the Perron solutions
for $f$ and such perturbations coincide. 
As a consequence, we prove that the Perron solution
with continuous boundary data
is the unique bounded solution that takes the required
boundary data outside a set of $(p,w)$-capacity zero. 
\end{abstract}
}

\bigskip

\noindent {\small \emph{Key words and phrases}: 
capacity,
degenerate quasilinear elliptic equation of divergence type,
Dirichlet problem, Perron solution,
quasicontinuous function,
resolutive.
}

\medskip

\noindent {\small Mathematics Subject Classification (2020):
  Primary:
35J66, 
Secondary:
31C45, 
35J25, 
35J92. 
}

\section{Introduction}
We consider the Dirichlet problem for quasilinear elliptic equations of the form
\begin{equation}			\label{Div-A}
\Div\A(x,\grad u) = 0
\end{equation}
in a bounded nonempty open subset $\Om$ of the $n$-dimensional Euclidean space $\R^n$. 
The mapping $\A:\Om\times\R^n\rightarrow \R^n$ satisfies the 
standard ellipticity assumptions with a parameter $1<p<\infty$ and 
a \p-admissible weight as in 
Heinonen--Kilpel\"ainen--Martio~\cite[Chapter~3]{HKM}. 

The Dirichlet problem 
amounts to
finding a solution of the partial differential equation in $\Om$ with prescribed 
boundary data on the boundary of $\Om$.
One of the most useful approaches to solving the Dirichlet problem in $\Om$ 
with arbitrary boundary data $f$ is  the Perron method.
This method was introduced by Perron~\cite{P} 
and independently Remak~\cite{remak}
in 1923 for the Laplace equation $\Delta u=0$ in a bounded domain $\Om\subset\R^n$.
It gives an upper and a lower Perron solution (see Definition~\ref{Perron-defi}) 
and when the two coincide, we get a suitable solution $Pf$ of the Dirichlet problem and $f$ is called \emph{resolutive}.

The Perron method for linear equations in Euclidean domains was studied 
by Brelot~\cite{MB1}, where a complete characterization of resolutive functions was given
in terms of the harmonic measure.
The Perron method was later
extended to nonlinear equations. 
Granlund--Lindqvist--Martio~\cite{GLM} were the first to use 
the Perron method to study the nonlinear equation 
\[ 
\Div(\grad_qF(x,\grad u))=0
\] 
(where $\grad_qF$ stands for the gradient of $F$ with respect to the second variable).
This
is a special type of equation~\eqref{Div-A}, including the \p-Laplace equation 
\begin{equation} \label{eq-p-Lap}
\Delta_pu:=\Div(|\grad u|^{p-2}\grad u)=0.
\end{equation}
Lindqvist--Martio~\cite{LM} studied boundary regularity of 
\eqref{Div-A} in the unweighted case
and also showed that continuous boundary data
$f$ are resolutive when $p>n-1$.
Kilpel\"ainen~\cite{TK89} extended the resolutivity to general $p$,
which in turn was extended to weighted $\R^n$ by Heinonen--Kilpel\"ainen--Martio~\cite{HKM}.
More recently, the Perron method was used to study \p-harmonic functions 
in the metric setting, see
\cite{BB}--\cite{BBSjodin}.
  
In this paper, we consider the weighted equation
\begin{equation*}			
\Div\A(x,\grad u) = 0
\end{equation*}
and show that arbitrary perturbations on sets of $(p,w)$-capacity zero 
of continuous boundary data~$f$
are resolutive and that the Perron solution for $f$ 
and such perturbations coincide, see Theorem~\ref{thm-Pf+h-Pf}. 
In Proposition~\ref{prop-Pf+h-Pf-Hf}, we also obtain, as a by-product, 
that Perron solutions of perturbations 
of Lipschitz boundary data $f$ are the same as the Sobolev solution of $f$.  
This perturbation result, as well as the equality of the Perron and Sobolev solutions, holds also 
for quasicontinuous representatives of Sobolev functions, see
Theorem~\ref{thm-Sob-Pf=Hf}.

Moreover, we prove in Theorem~\ref{thm-unique}
that the Perron solution for the equation \eqref{Div-A}
with continuous boundary data
is the \emph{unique} bounded solution of \eqref{Div-A}
that takes the required 
boundary data outside a set of $(p,w)$-capacity zero. 
A somewhat weaker uniqueness result is proved for quasicontinuous Sobolev functions in 
Corollary~\ref{cor-unique-qcont}.

Much as we use Heinonen--Kilpel\"ainen--Martio~\cite{HKM} as the principal literature for this paper, our proof of resolutivity for continuous boundary data is 
quite
different from the one considered in \cite{HKM}.
In particular, we do not use exhaustions by regular domains. 
The obstacle problem for the operator $\Div\A(x,\grad u)$
and a convergence theorem for obstacle problems play a crucial role in the proof of our main results.

For \p-harmonic functions, i.e.\ solutions
of the \p-Laplace equation~\eqref{eq-p-Lap}, most of the
results in this paper follow from
Bj\"orn--Bj\"orn--Shan\-mu\-ga\-lin\-gam~\cite{BBS2}, \cite{BBSdir},
where this was proved for \p-energy minimizers in metric spaces.
The proofs here have been inspired by \cite{BBS2} and \cite{BBSdir},
but have been adapted to the usual Sobolev spaces to make them
more accessible for people not familiar
with the nonlinear potential theory on metric spaces
and Sobolev spaces based on upper gradients.
They also apply to the more general $\A$-harmonic functions,
defined by equations rather than minimization problems.

\begin{ack}
A.~B and J.~B. were partially supported by the Swedish Research Council grants 
2016-03424 resp.\ 621-2014-3974 and
2018-04106.
A.~M. was supported by 
the SIDA (Swedish International Development
Cooperation Agency)
project 316-2014  
``Capacity building in Mathematics and its applications'' 
under the SIDA bilateral program with the Makerere University 2015--2020,
contribution No.\ 51180060.
\end{ack}

\section{Notation and preliminaries}
\label{sect-not-pre}
In this section, we present the basic notation and definitions that will be needed in this paper.
Throughout, we assume that $\Om$ is a bounded nonempty open subset of the $n$-dimensional Euclidean space $\R^n, n\geq2$, and $1<p<\infty$.
We use $\bdy\Om$ and $\clOm$ to denote the boundary and the closure 
of $\Om$, respectively.

We write $x$ to mean a point $x=(x_1,\cdots,x_n)\in\R^n$ and for a function $v$ 
which is infinitely many times continuously
differentiable, i.e.\ $v\in C^\infty(\Om)$, 
we write $\grad v=(\bdy_1v,\cdots,\bdy_nv)$ for the gradient of $v$.
We follow Heinonen--Kilpel\"ainen--Martio~\cite{HKM} as the primary reference 
for the material in this paper.

First, we give the definition of a weighted Sobolev space, 
which is 
crucial  when studying degenerate elliptic differential equations, see 
\cite{HKM} and Kilpel\"ainen~\cite{TK94}.

\begin{defi}
The \emph{weighted Sobolev space \(\Hp(\Om,w)\)} is defined to be the completion of 
the set of all 
\(v\in C^\infty(\Om)\) such that 
\[
\|v\|_{\Hp(\Om,w)}=\biggl(\int_\Om(|v|^p+|\grad v|^p) w
\,dx\biggr)^{1/p}<\infty
\]
with respect to the norm \(\|v\|_{\Hp(\Om,w)}\), where \(w\) is the weight 
function which we define later.
\end{defi}

The space \(\Hp_0(\Om,w)\) is the completion of \(C_0^\infty(\Om)\) 
in \(\Hp(\Om,w)\) while a function $v$ is in \(\Hploc(\Om,w)\) 
if and only if it belongs to \(\Hp(\Om',w)\) for every open set \(\Om'\Subset\Om\).
As usual, $E \Subset \Om$ if $\itoverline{E}$ is a compact subset of $\Om$
and
\[
  C_0^\infty(\Om)=\{v \in C^\infty(\R^n) : \supp v \Subset \Om\}.
\]

Throughout the paper, the mapping $\A:\Om\times\R^n\to\R^n$, defining
the elliptic operator \eqref{Div-A}, satisfies the following
assumptions with a parameter $1<p<\infty$, a \p-admissible weight
$w(x)$ and for some constants $\al,\be>0$, see \cite[(3.3)--(3.7)]{HKM}:

First, assume that $\A(x,q)$ is measurable in $x$ for every $q\in\R^n$, and continuous in $q$ for a.e.\ $x\in\R^n$.
Also, for all $q\in\R^n$ and a.e.\ $x\in\R^n$, the following hold
\begin{alignat}{2}   \label{ellip-conds}
	 \A(x,q)\cdot q \ge\al w(x)|q|^p& &\quad &\text{and} \quad
	|\A(x,q)|\le\be w(x)|q|^{p-1},\\
	(\A(x,q_1)-\A(x,q_2))\cdot(q_1-q_2) >0 &&\quad&\text{for }
        q_1,q_2\in\R^n,\  q_1\neq q_2, \nonumber \\
        \A(x,\lambda q)=\lambda|\lambda|^{p-2}\A(x,q) &&\quad
        &\text{for }\lambda\in\R,\ \lambda\neq0. \nonumber
\end{alignat}

\begin{defi}
  A function $u\in\Hploc(\Om,w)$ is said to be a
(\emph{weak}) \emph{solution} of \eqref{Div-A} in $\Om$
if for all test functions $\phi\in C_0^\infty(\Om)$,
the following integral identity holds
\begin{equation}      \label{Int-A}
\int_\Om\A(x,\grad u)\cdot\grad\phi\,dx=0.
\end{equation}
A function $u\in\Hploc(\Om,w)$ is said to be a
\emph{supersolution} of (\ref{Div-A}) in $\Om$ if 
for  all nonnegative functions $\phi\in C_0^\infty(\Om)$,
\[
\int_\Om\A(x,\grad u)\cdot\grad\phi\,dx\geq 0.
\]
A function $u$ is a \emph{subsolution} of \eqref{Div-A} if $-u$ is a supersolution of (\ref{Div-A}). 
\end{defi}

The sum of two (super)solutions is in general not a (super)solution. However, 
if $u$ and $v$ are two (super)solutions, then $\min\{u,v\}$ is a supersolution, 
see
\cite[Theorem~3.23]{HKM}.
If $u$ is a supersolution and $a,b\in\R$, then $a u+b$
is a supersolution provided that $a \ge 0$.

It is rather straightforward that $u$ is a solution if and only
  if it is both a sub- and a supersolution, see \cite[bottom p.~58]{HKM}. 
By \cite[Theorems~3.70 and~6.6]{HKM}, every
solution $u$ has a H\"older continuous representative $v$
(i.e.\ $v=u$ a.e.).

\begin{defi}
A function $u$ is $\A$-\emph{harmonic} in $\Om$ if it is a continuous weak solution 
of \eqref{Div-A} in $\Om$.
\end{defi}

We remark that $\A$-harmonic functions do not in general form a linear space.
However, if $u$ is $\A$-harmonic and $a,b\in\R$, then $a u+b$
is also $\A$-harmonic.
Nonnegative $\A$-harmonic functions $u$ in 
a connected open set
$\Om$ satisfy Harnack's inequality $\sup_K u\leq c\inf_K u$ whenever 
$K\subset\Om$ is compact, with the constant $c$ depending on $K$, 
see \cite[Section~6.2]{HKM}.

\begin{defi}
A \emph{weight}  $w$ on $\R^n$ is a nonnegative locally integrable function.
We say that a weight $w$ is 
\emph{\p-admissible} with \(p\geq1\) if the associated measure 
$d\mu = w\,dx$
is doubling and supports a \p-Poincar\'e inequality,
see \cite[Chapters~1 and~20]{HKM}.
\end{defi}

For instance, weights belonging to the Muckenhoupt class
$A_p$ are \p-admissible
as exhibited for example 
by Heinonen--Kilpel\"ainen--Martio~\cite{HKM} and  Kilpel\"ainen~\cite{TK94}.
By a weight $w \in A_p$ we mean that there exists a constant \(C>0\) such that
for all balls $B\subset\R^n$,  
\begin{alignat*}{2}
  \biggl(\int_B w(x)\,dx\biggr)\biggl(\int_B w(x)^{1/(1-p)}\,dx\biggr)^{p-1}
  &\leq C|B|^p, & \quad & \text{if } 1<p<\infty, \\
  \int_B w(x)\,dx& \leq C |B|\essinf_B w, & \quad & \text{if } p=1, 
\end{alignat*}
where 
\(|B|\) is the \(n\)-dimensional Lebesgue
measure of \(B\). 

We follow \cite[Section~2.35]{HKM} 
defining the Sobolev capacity as follows.

\begin{defi}		\label{defi-wcap}
Let $E$ be a subset of $\R^n$. The \emph{Sobolev $(p,w)$-capacity} of $E$ is 
\[
\Cpw(E)=\inf\int_{\R^n}(|u|^p+|\grad u|^p)w\,dx,
\]
where the infimum is taken over all $u\in \Hp(\R^n,w)$
such that $u=1$ in an open set containing $E$.
\end{defi}

The Sobolev \((p,w)\)-capacity is a monotone, subadditive set
function.
It follows directly from the definition that for all $E\subset\R^n$,
\begin{equation} \label{eq-Cpw-outer}
\Cpw(E)=\inf_{\substack{ G\supset E \\ G \text{ open}}}  \Cpw(G).
\end{equation}
In particular, if $\Cpw(E)=0$ then there exist open sets $U_j\supset E$
with $\Cpw(U_j)\to0$ as $j\to\infty$.
For details, we refer the interested reader to 
\cite[Section~2.1]{HKM}.
A property is said to hold \emph{quasieverywhere} (abbreviated q.e.), 
if it holds for every point outside
a set of Sobolev $(p,w)$-capacity zero.

\section{Perron solutions and resolutivity}	\label{sect-Perron-res}

In order to discuss the Perron solutions for \eqref{Div-A}, we first recall 
the following basic results from 
Heinonen--Kilpel\"ainen--Martio~\cite[Chapters~7 and~9]{HKM}.

\begin{defi}\label{A-superh-defi}
A function $u:\Om\rightarrow (-\infty,\infty]$ is $\A$-\emph{superharmonic} in $\Om$ if 
\begin{enumerate}
\renewcommand{\theenumi}{\textup{(\roman{enumi})}}%
\item $u$ is lower semicontinuous,
\item $u$ is not identically $\infty$ in any component of $\Om$,
\item for every open $\Om'\Subset\Om$ and all functions \(v\in C(\clOm')\) 
which are $\A$-harmonic in \(\Om'\), we have $v\leq u$ in $\Om'$ 
whenever $v\leq u$ on \(\bdy\Om'\).
\end{enumerate}
A function $u:\Om\rightarrow [-\infty,\infty)$ is $\A$-\emph{subharmonic} 
in $\Om$ if $-u$ is $\A$-superharmonic in~$\Om$.
\end{defi}

Let $u$ and $v$ be $\A$-superharmonic. 
Then $a u+b$ and $\min\{u,v\}$ are $\A$-super\-har\-mon\-ic
whenever $a\geq0$ and $b$ are real numbers,
but in general $u+v$ is not $\A$-superharmonic, see \cite[Lemmas~7.1~and~7.2]{HKM}.

We briefly state how supersolutions and $\A$-superharmonic functions are related.
It is proved in~\cite[Theorem~7.16]{HKM} that if $u$ is a supersolution of \eqref{Div-A}
and
\begin{equation} \label{eq-u*}
  u^*(x)=\essliminf_{\Om\ni y\rightarrow x}u(y)\quad\text{for every }x\in\Om,
\end{equation}
then $u^*=u$ a.e.\ and $u^*$ is  \(\A\)-super\-har\-mon\-ic.
Conversely, if 
$u$ is an  \(\A\)-super\-har\-mon\-ic function in  \(\Om\),
  then $u^*=u$ in $\Om$.
  If moreover, $u$ is
locally bounded from above, then
\(u\in\Hploc(\Om,w)\) and  \(u\) is a supersolution of \eqref{Div-A} in  \(\Om\), 
see \cite[Corollary~7.20]{HKM}. 
That is, every supersolution has an $\A$-superharmonic representative 
and locally bounded $\A$-superharmonic functions are supersolutions.

\begin{defi} \label{Perron-defi}
Given  a function $f:\bdy\Om\rightarrow [-\infty,\infty]$, let $\U_f$ be the set of all $\A$-superharmonic functions $u$ on $\Om$ bounded from below such that
\[
\liminf_{\Om\ni y\rightarrow x}u(y)\geq f(x)\quad\text{for all } x\in\bdy\Om.
\]
The \emph{upper Perron solution} $\uP f$ of $f$ is defined by 
\[
\uP f(x)=\inf_{u\in\U_f}u(x),\quad x\in\Om.
\]
Analogously, let $\LL_f$ be the set of all $\A$-subharmonic functions $v$ on $\Om$ 
bounded from above such that
\[
\limsup_{\Om\ni y\rightarrow x}v(y)\leq f(x)\quad\text{for all }x\in\bdy\Om.
\]
The \emph{lower Perron solution} $\lP  f$ of $f$ is defined by 
\[
\lP  f(x)=\sup_{v\in\LL_f}v(x),\quad x\in\Om.
\]
\end{defi} 

We remark that if $\U_f=\emptyset$, then
$\uP f\equiv\infty$ and if $\LL_f=\emptyset$, then $\lP f\equiv-\infty$. 
In every component $\Om'$ of \(\Om\), $\uP f$ (and $\lP f$)
is either \(\A\)-harmonic or identically \(\pm\infty\) in \(\Om'\),
see \cite[Theorem~9.2]{HKM}.

If $\uP f=\lP f$
is $\A$-harmonic,
then $f$ is said to be \emph{resolutive} with respect to $\Om$. 
In this case, we write $Pf:=\uP f$.
Continuous functions $f$ are resolutive by 
\cite[Theorem~9.25]{HKM}.

The following comparison principle shows that $\lP f\leq \uP f$.

\begin{thm}		\label{thm-comp-princ}
\textup{(\cite[Comparison principle~7.6]{HKM})}
Assume that $u$ is $\A$-superharmonic and that $v$ is $\A$-subharmonic in $\Om$. 
If 
\[
\limsup_{\Om\ni y\rightarrow x}v(y)\leq \liminf_{\Om\ni y\rightarrow x}u(y)\quad\text{for all } x\in\bdy\Om,
\]
and if both sides are not simultaneously $\infty$ or $-\infty$, then $v\leq u$ in $\Om$.
\end{thm}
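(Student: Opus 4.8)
The plan is to reduce the inequality to a weak comparison between a supersolution and a subsolution in the weighted Sobolev space, where it can be extracted from the strict monotonicity of $\A$ in~\eqref{ellip-conds} by testing with a function of the form $(v-u-\eps)_+$. First, since all the objects involved split over the components of $\Om$, I would assume $\Om$ connected, and I may assume $u\not\equiv\infty$ and $v\not\equiv-\infty$, so that $u$ is $(-\infty,\infty]$-valued and $v$ is $[-\infty,\infty)$-valued. The hypothesis, together with its exclusion of simultaneous $\pm\infty$, forces $\liminf_{\Om\ni y\to x}u(y)>-\infty$ at every $x\in\bdy\Om$; since $x\mapsto\liminf_{\Om\ni y\to x}u(y)$ is lower semicontinuous on the compact set $\clOm$, it is bounded below on $\bdy\Om$, and dually $\limsup_{\Om\ni y\to x}v(y)$ is bounded above on $\bdy\Om$. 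Fix a finite $M$ with $\liminf_{\Om\ni y\to x}u(y)\ge-M$ and $\limsup_{\Om\ni y\to x}v(y)\le M$ for all $x\in\bdy\Om$.

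Next I would truncate. For an integer $k\ge M$ set $u_k=\min\{u,k\}$ and $v_k=\max\{v,-k\}$; these are $\A$-super- resp.\ $\A$-subharmonic and locally bounded, hence, by \cite[Corollary~7.20]{HKM} together with the homogeneity in~\eqref{ellip-conds}, $u_k$ is a supersolution and $v_k$ a subsolution of~\eqref{Div-A} lying in $\Hploc(\Om,w)$. Since $\min\{\,\cdot\,,k\}$ and $\max\{\,\cdot\,,-k\}$ are continuous and nondecreasing, the boundary $\liminf$ and $\limsup$ pass through them, and the choice $k\ge M$ yields $\limsup_{\Om\ni y\to x}v_k(y)\le\liminf_{\Om\ni y\to x}u_k(y)$, with both sides finite, for all $x\in\bdy\Om$. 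As $v\le v_k$ and $u_k\le u$ pointwise in $\Om$, it suffices to prove $v_k\le u_k$ in $\Om$ for one such $k$.

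Now fix $\eps>0$ and set $F=\{x\in\Om:v_k(x)-u_k(x)\ge\eps\}$. Since $v_k-u_k$ is upper semicontinuous on $\Om$ and $\limsup_{\Om\ni y\to x}(v_k-u_k)\le0$ at each $x\in\bdy\Om$, the set $F$ stays at positive distance from $\bdy\Om$, so $F\Subset\Om$ and $\phi:=(v_k-u_k-\eps)_+$ lies in $\Hp_0(\Om,w)$, is nonnegative, and has $\supp\phi\subset F$. Using $\phi$ as a test function in the supersolution inequality for $u_k$ and in the subsolution inequality for $v_k$ and subtracting, one obtains
\[
\int_{\{v_k-u_k>\eps\}}\bigl(\A(x,\grad v_k)-\A(x,\grad u_k)\bigr)\cdot(\grad v_k-\grad u_k)\,dx\le0 .
\]
By the strict monotonicity in~\eqref{ellip-conds} the integrand is nonnegative, and strictly positive wherever $\grad v_k\ne\grad u_k$; hence $\grad v_k=\grad u_k$ a.e.\ on $\{v_k-u_k>\eps\}$, so $\grad\phi=0$ a.e.\ in $\Om$. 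Since $\Om$ is connected and $\phi$ has compact support, $\phi\equiv0$, i.e.\ $v_k\le u_k+\eps$ a.e.\ in $\Om$. Because $u_k+\eps$ is $\A$-superharmonic, $v_k$ is $\A$-subharmonic, and both are locally bounded and hence finely continuous, each coincides at every point with its essential $\liminf$, resp.\ $\limsup$ (cf.~\eqref{eq-u*}), so this a.e.\ inequality in fact holds everywhere. Letting $\eps\to0$ gives $v_k\le u_k$ in $\Om$, and therefore $v\le u$ in $\Om$.

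The step I expect to be the main obstacle is the last one: upgrading the almost-everywhere inequality to a pointwise one. This is where the argument leaves the Sobolev bookkeeping and must call on nonlinear potential theory, namely the fine continuity of $\A$-super- and $\A$-subharmonic functions, equivalently the fact that such functions are determined at each point by their essential $\liminf$, resp.\ $\limsup$; the truncation was inserted precisely so that this input is applied to locally bounded functions. A secondary point needing care is the boundary reduction: one must verify that the hypothesis, with its exclusion of simultaneous $\pm\infty$, really does make the one-sided boundary limits bounded, so that truncating at a single finite level preserves the boundary inequality.
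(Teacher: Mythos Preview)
The paper does not supply its own proof of this statement; it is quoted from \cite[Comparison principle~7.6]{HKM}, so there is no proof here to compare against. Your argument follows the standard route (essentially that of \cite{HKM}): truncate to reduce to locally bounded super- and subsolutions in $\Hploc(\Om,w)$, test with $(v_k-u_k-\eps)_+$, and invoke the strict monotonicity of~$\A$. The boundary reduction is carried out correctly, including the observation that the exclusion of simultaneous $\pm\infty$ forces $\liminf_{y\to x}u(y)>-\infty$ at every $x\in\bdy\Om$, whence lower semicontinuity of this function on the compact set $\bdy\Om$ yields a uniform lower bound.

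The step you flagged as the main obstacle does have a gap as written. From $v_k\le u_k+\eps$ a.e.\ together with $v_k(x_0)=\operatorname*{ess\,lim\,sup}_{y\to x_0}v_k(y)$ and $u_k(x_0)=\essliminf_{y\to x_0}u_k(y)$ you obtain only
\[
v_k(x_0)\le\operatorname*{ess\,lim\,sup}_{y\to x_0}u_k(y)+\eps,
\]
and for a bounded $\A$-superharmonic function the essential $\limsup$ can strictly exceed $u_k(x_0)$: if $u=\sum_j c_j G(\,\cdot\,,x_j)$ is a convergent sum of Green functions with poles $x_j\to x_0$ and $u(x_0)<\infty$, then $\min\{u,k\}$ is bounded and $\A$-superharmonic but its essential $\limsup$ at $x_0$ is $k>u(x_0)$. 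Fine continuity does not bridge this either. A clean fix is to interpose a continuous $\A$-harmonic function: on a ball $B\Subset\Om$ containing $x_0$, let $h$ be the $\A$-harmonic function with $h-v_k\in\Hp_0(B,w)$; the same test-function argument yields $v_k\le h$ and $h\le u_k$ a.e.\ in $B$, and since $h$ is continuous, the identities above now give $v_k(x_0)\le h(x_0)\le u_k(x_0)$.
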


We follow \cite[Chapter~3]{HKM} giving the following definition.

\begin{defi}
Let $\psi:\Om\rightarrow [-\infty,\infty]$ and $f\in\Hp(\Om,w)$. 
Let
\[
\K_{\psi,f}(\Om)=\{v\in\Hp(\Om,w):v-f\in\Hp_0(\Om,w)\text{ and } v\geq\psi\text{ a.e\ in }\Om\}.
\]
A function $u\in\K_{\psi,f}(\Om)$ is a
\emph{solution of the obstacle problem} in $\Om$ 
with obstacle $\psi$ and boundary data $f$ if
\[ 
\int_\Om\A(x, \grad u)\cdot\grad(v-u)\,dx\geq 0\quad\text{for all }v\in\K_{\psi,f}(\Om).
\]
\end{defi}

In particular, a solution $u$ of the obstacle problem for
  $\K_{\psi,u}(\Om)$ 
with $\psi\equiv -\infty$ is a solution of \eqref{Div-A}.
By considering $v=u+\phi$ with \(0\leq\phi\in C^\infty_0(\Om)\), it is 
easily seen that the solution $u$ of the obstacle problem is always a supersolution 
of \eqref{Div-A} in $\Om$.
Conversely, a supersolution \(u\) in $\Om$ is always a solution of 
the obstacle problem for $\K_{u,u}(\Om')$ for all open sets $\Om' \Subset \Om$.
Moreover, a solution \(u\) of \eqref{Div-A} is a solution of 
the obstacle problem for \(\K_{\psi,u}(\Om')\) with $\psi\equiv-\infty$ 
for all $\Om'\Subset\Om$, see \cite[Section~3.19]{HKM}.
 
By \cite[Theorem~3.21]{HKM}, there is an almost everywhere (a.e) 
unique solution $u$ of 
the obstacle problem whenever $\K_{\psi,f}(\Om)$ is nonempty.
Furthermore, by defining  $u^*$ as in \eqref{eq-u*}
we get a \emph{lower semicontinuously regularized solution} in the same 
equivalence class as $u$, see \cite[Theorem~3.63]{HKM}. 

We call $u^*$ the
\emph{lower semicontinuous\/ \textup{(}lsc\/\textup{)} regularization}
of $u$.
Moreover, with $\psi\equiv-\infty$, the lsc-regularization of the solution 
of the obstacle problem for \(\K_{\psi,f}(\Om)\) provides us with the 
\(\A\)-\emph{harmonic extension} \(Hf\) of \(f\) in \(\Om\), that is,  
\(Hf-f\in\Hp_0(\Om,w)\)
and $Hf$ is $\A$-harmonic.
The continuity of \(Hf\) in \(\Om\) is guaranteed by \cite[Theorem~3.70]{HKM}.

\begin{defi}
A point $x\in\bdy\Om$ is \emph{Sobolev regular} if, 
for every $f\in\Hp(\Om,w)\cap C(\clOm)$, the $\A$-harmonic function $Hf$ 
in $\Om$ with $Hf-f\in\Hp_0(\Om,w)$ satisfies
\[
\lim_{\Om\ni y\rightarrow x}Hf(y)=f(x).
\]
Furthermore, $x  \in \bdy \Om$ is \emph{regular} if 
\[
    \lim_{\Om\ni y\rightarrow x}Pf(y)=f(x)
    \quad \text{for all } f \in C(\bdy \Om).
\]
If $x\in\bdy\Om$ is not (Sobolev) regular, then it is (Sobolev) 
\emph{irregular}.
\end{defi}

By \cite[Theorem~9.20]{HKM}, $x$ is regular if and only if it is Sobolev regular,
we will therefore just say ``regular'' from now on.
By the Kellogg property~\cite[Theorem~8.10 and~9.11]{HKM}, 
the set of irregular points on $\bdy\Om$ has Sobolev $(p,w)$-capacity zero.

The following result is due to Bj\"orn--Bj\"orn--Shanmugalingam~\cite[Lemma~5.3]{BBS2}.
Here it is slightly modified to suite our context.
For completeness and the reader's convenience, the proof is included.

\begin{lem}		\label{lem-small-Cp}
  Let $\{U_k\}_{k=1}^\infty$ be a decreasing sequence of 
  open sets 
in $\R^n$ such that $\Cpw(U_k)<2^{-kp}$. 
Then there exists a decreasing sequence of nonnegative functions $\{\psi_j\}_{j=1}^\infty$ 
such that for all $j,m=1,2,\cdots$\,,
\[
\|\psi_j\|_{\Hp(\R^n,w)}<2^{-j}\quad\text{and} \quad \psi_j\geq m\text{ in }U_{j+m}.
\]
In particular, $\psi_j=\infty$ on $\bigcap_{k=1}^\infty U_k$.
\end{lem}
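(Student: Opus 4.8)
The plan is to build each $\psi_j$ as a sum of capacitary test functions for the sets $U_{j+m}$, $m=1,2,\dots$, scaled so that the series converges in $\Hp(\R^n,w)$. For each $k$, pick by Definition~\ref{defi-wcap} a function $u_k\in\Hp(\R^n,w)$ with $u_k=1$ on an open set containing $U_k$, $0\le u_k\le 1$ (truncating at $0$ and $1$ only decreases the norm), and $\|u_k\|_{\Hp(\R^n,w)}^p<2^{-kp}$, i.e.\ $\|u_k\|_{\Hp(\R^n,w)}<2^{-k}$. Then set
\[
\psi_j=\sum_{k=j+1}^\infty u_k .
\]
Since $\sum_{k=j+1}^\infty \|u_k\|_{\Hp(\R^n,w)}<\sum_{k=j+1}^\infty 2^{-k}=2^{-j}$, the series converges absolutely in the Banach space $\Hp(\R^n,w)$, so $\psi_j\in\Hp(\R^n,w)$ with $\|\psi_j\|_{\Hp(\R^n,w)}<2^{-j}$. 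The sequence $\{\psi_j\}_j$ is decreasing because passing from $j$ to $j+1$ just drops the nonnegative term $u_{j+1}$. Each $\psi_j$ is nonnegative as a sum of nonnegative functions.

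For the lower bound, fix $j$ and $m$. On $U_{j+m}$, monotonicity of the sequence $\{U_k\}$ gives $U_{j+m}\subset U_{j+1},U_{j+2},\dots,U_{j+m}$, so each of the functions $u_{j+1},\dots,u_{j+m}$ equals $1$ there (each $u_k=1$ on a neighbourhood of $U_k\supset U_{j+m}$ when $k\le j+m$). Hence on $U_{j+m}$,
\[
\psi_j=\sum_{k=j+1}^\infty u_k\ \ge\ \sum_{k=j+1}^{j+m} u_k\ =\ m,
\]
which is the claimed inequality $\psi_j\ge m$ in $U_{j+m}$. Finally, if $x\in\bigcap_{k=1}^\infty U_k$, then $x\in U_{j+m}$ for every $m$, so $\psi_j(x)\ge m$ for all $m$, i.e.\ $\psi_j(x)=\infty$.

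The only genuinely delicate point is the interplay between the pointwise statements and the Sobolev-space convergence: a priori $\psi_j$ is only defined as an $\Hp$-limit, so ``$\psi_j\ge m$ in $U_{j+m}$'' must be read in the a.e.\ (or, after choosing a quasicontinuous representative, q.e.)\ sense, and likewise ``$\psi_j=\infty$ on $\bigcap_k U_k$.'' This is harmless for the intended application, since the obstacle problem only sees $\psi_j$ up to sets of capacity zero; one can also note that the partial sums $\sum_{k=j+1}^{N}u_k$ increase pointwise a.e.\ to $\psi_j$, so the bound $\ge m$ on $U_{j+m}$ holds for the limit wherever it holds for the partial sums, i.e.\ a.e.\ on $U_{j+m}$. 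No other step requires more than the definition of $\Cpw$, completeness of $\Hp(\R^n,w)$, and the monotonicity of $\{U_k\}$.
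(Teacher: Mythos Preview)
Your proof is correct and follows essentially the same construction as the paper's: choose capacitary test functions $u_k$ with $\|u_k\|_{\Hp(\R^n,w)}<2^{-k}$ and set $\psi_j=\sum_{k>j}u_k$. Your additional remarks about truncation at $1$ and about the pointwise-versus-$\Hp$ interpretation of $\psi_j$ are not in the paper's version but are harmless refinements rather than a different approach.
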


\begin{proof}
Since $\Cpw(U_k)<2^{-kp}$, by Definition~\ref{defi-wcap} 
there exist $\phi_k\in \Hp(\R^n,w)$ 
such that $\phi_k=1$ in $U_k$ 
and $\|\phi_k\|_{\Hp(\R^n,w)}<2^{-k}$.
Replacing $\phi_k$ by its positive part $\max\{\phi_k,0\}$, 
we can assume that each $\phi_k$ is nonnegative.
Define 
\[
\psi_j=\sum_{k=j+1}^\infty\phi_k,\quad j=1,2,\cdots.
\] 
Then 
\[
\|\psi_j\|_{\Hp(\R^n,w)}\leq\sum_{k=j+1}^\infty\|\phi_k\|_{\Hp(\R^n,w)}
< \sum_{k=j+1}^\infty2^{-k} = 2^{-j}.
\] 
Since $\phi_k\geq1$ on each $U_k$ and $U_k\supset U_{j+m}$ when $j+1\leq k\leq j+m$, 
it follows that $\psi_j\geq m$ in $U_{j+m}$.
\end{proof}

We will need the following convergence theorem due to 
Heinonen--Kilpel\"ainen--Martio~\cite[Theorem~3.79]{HKM}
 in order to prove the next proposition.

\begin{thm}  	\label{cov-thm}
Let $\{\psi_j\}_{j=1}^\infty$ be an a.e.\ decreasing sequence of functions in $\Hp(\Om,w)$ 
such that $\psi_j\rightarrow\psi$ in $\Hp(\Om,w)$.
Let \(u_j\in\Hp(\Om,w)\) be a solution of the obstacle problem for
\(\K_{\psi_j,\psi_j}(\Om)\).
Then there exists a function \(u\in\Hp(\Om,w)\) such that the sequence \(u_j\) 
decreases a.e.\ in $\Om$ to \(u\) and \(u\) is a solution 
of the obstacle problem for \(\K_{\psi,\psi}(\Om)\).
\end{thm}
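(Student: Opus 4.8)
The plan is to produce the limit $u$ by combining three ingredients: a comparison argument making the sequence $\{u_j\}$ monotone, weak compactness in the reflexive space $\Hp(\Om,w)$, and a Minty-type reformulation to pass to the limit in the variational inequality. First I would show that $u_{j+1}\le u_j$ a.e.\ for every $j$. Since each $u_j$ solves the obstacle problem for $\K_{\psi_j,\psi_j}(\Om)$ and $u_j\ge\psi_j\ge\psi_{j+1}$ a.e., the functions $\max\{u_j,u_{j+1}\}$ and $\min\{u_j,u_{j+1}\}$ lie in $\K_{\psi_j,\psi_j}(\Om)$ and $\K_{\psi_{j+1},\psi_{j+1}}(\Om)$ respectively: indeed $\max\{u_j,u_{j+1}\}-\psi_j\ge0$ and $\min\{u_j,u_{j+1}\}-\psi_{j+1}\ge0$ are nonnegative $\Hp(\Om,w)$-functions whose sum equals $(u_j-\psi_j)+(u_{j+1}-\psi_{j+1})\in\Hp_0(\Om,w)$, so each, being nonnegative and majorized by an $\Hp_0(\Om,w)$-function, itself lies in $\Hp_0(\Om,w)$. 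Testing the inequality for $u_j$ with $\max\{u_j,u_{j+1}\}$, the inequality for $u_{j+1}$ with $\min\{u_j,u_{j+1}\}$, and adding, the strict monotonicity of $\A$ in \eqref{ellip-conds} forces $\grad(u_{j+1}-u_j)=0$ a.e.\ on $\{u_{j+1}>u_j\}$; since $0\le(u_{j+1}-u_j)_+\le u_{j+1}-\psi_{j+1}\in\Hp_0(\Om,w)$, the \p-Poincar\'e inequality then gives $(u_{j+1}-u_j)_+=0$ a.e. Hence $u_j\downarrow u$ pointwise a.e.\ for some $u\colon\Om\to[-\infty,\infty)$.

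Next I would establish uniform bounds. Using $v=\psi_j$ as a competitor in the inequality for $u_j$ gives $\int_\Om\A(x,\grad u_j)\cdot\grad u_j\,dx\le\int_\Om\A(x,\grad u_j)\cdot\grad\psi_j\,dx$, and the ellipticity bounds \eqref{ellip-conds} together with H\"older's inequality yield $\|\grad u_j\|_{L^p(\Om,w)}\le(\be/\al)\|\grad\psi_j\|_{L^p(\Om,w)}$, which is bounded since $\psi_j\to\psi$ in $\Hp(\Om,w)$. Applying the \p-Poincar\'e inequality to $u_j-\psi_j\in\Hp_0(\Om,w)$ on the bounded set $\Om$ then bounds $\|u_j\|_{\Hp(\Om,w)}$ uniformly. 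Since $\psi\le u_j\le u_1$ a.e.\ with $\psi,u_1\in L^p(\Om,w)$ and $u_j\downarrow u$ a.e., dominated convergence gives $u_j\to u$ in $L^p(\Om,w)$; with the uniform $\Hp$-bound and reflexivity of $\Hp(\Om,w)$ ($1<p<\infty$) this forces $u_j\rightharpoonup u$ weakly in $\Hp(\Om,w)$. Consequently $u-\psi$, being the weak $\Hp(\Om,w)$-limit of $u_j-\psi_j\in\Hp_0(\Om,w)$, lies in $\Hp_0(\Om,w)$, and $u=\lim_ju_j\ge\lim_j\psi_j=\psi$ a.e.; thus $u\in\K_{\psi,\psi}(\Om)\subset\Hp(\Om,w)$.

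It remains to check that $u$ solves the obstacle problem for $\K_{\psi,\psi}(\Om)$. Given $v\in\K_{\psi,\psi}(\Om)$, put $v_j:=v+(\psi_j-\psi)$; then $v_j-\psi_j=v-\psi\in\Hp_0(\Om,w)$, and $v_j=v+\psi_j-\psi\ge\psi_j$ a.e., so $v_j\in\K_{\psi_j,\psi_j}(\Om)$, and $v_j\to v$ in $\Hp(\Om,w)$. Testing the inequality for $u_j$ with $v_j$ and using the monotonicity of $\A$, one obtains $\int_\Om\A(x,\grad v_j)\cdot\grad(v_j-u_j)\,dx\ge\int_\Om\A(x,\grad u_j)\cdot\grad(v_j-u_j)\,dx\ge0$. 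Since $\grad v_j\to\grad v$ in $L^p(\Om,w)$, the growth bound and continuity of $\A$ give (via dominated convergence) $\A(\cdot,\grad v_j)\to\A(\cdot,\grad v)$ in $L^{p'}(\Om,w)$, while $\grad(v_j-u_j)\rightharpoonup\grad(v-u)$ weakly in $L^p(\Om,w)$; letting $j\to\infty$ yields $\int_\Om\A(x,\grad v)\cdot\grad(v-u)\,dx\ge0$ for every $v\in\K_{\psi,\psi}(\Om)$. Finally, applying this with the convex combinations $v_t:=(1-t)u+tv\in\K_{\psi,\psi}(\Om)$, $t\in(0,1)$, dividing by $t$ and letting $t\to0^+$ (continuity of $\A(x,\cdot)$ and the growth bound) gives $\int_\Om\A(x,\grad u)\cdot\grad(v-u)\,dx\ge0$ for all $v\in\K_{\psi,\psi}(\Om)$; hence $u$ is a solution, and its a.e.\ uniqueness from \cite[Theorem~3.21]{HKM} identifies it. Together with the first paragraph this gives all the assertions.

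The main obstacle is the passage to the limit in the variational inequality: because the Sobolev boundary datum $\psi_j$ varies with $j$, neither $u$ nor the earlier $u_k$ is an admissible competitor in the $j$-th obstacle problem, so one must first pass to the Minty form (which is linear in the tested gradient) and only then send $j\to\infty$, using the shifted competitors $v_j=v+\psi_j-\psi$. The other point requiring care is the monotonicity of the first paragraph, which simultaneously supplies the pointwise a.e.\ convergence and the two-sided bound $\psi\le u_j\le u_1$ needed for the dominated-convergence step; the remaining estimates are routine applications of \eqref{ellip-conds}, the \p-Poincar\'e inequality, and weak compactness.
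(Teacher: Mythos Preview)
The paper does not give its own proof of this statement; Theorem~\ref{cov-thm} is quoted verbatim from \cite[Theorem~3.79]{HKM} and used as a black box. So there is no argument in the paper to compare against.

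Your proof is correct and follows the standard route for such convergence results: comparison for the monotonicity, uniform energy bounds via the ellipticity conditions and Poincar\'e, weak compactness, and Minty's lemma with the shifted competitors $v_j=v+\psi_j-\psi$ to handle the moving boundary data. One small technical slip: in the weighted setting the natural target space for $\A(\cdot,\grad v_j)$ is not $L^{p'}(\Om,w)$ but rather $L^{p'}(\Om,w^{1-p'})$ (equivalently, $\A(\cdot,\grad v_j)/w\in L^{p'}(\Om,w)$), since that is the dual space that pairs with gradients in $L^p(\Om,w)$. The growth bound $|\A(x,q)|\le\be w(x)|q|^{p-1}$ gives exactly this, and your dominated-convergence argument goes through unchanged in that space; only the name of the space needs adjusting.
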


\begin{prop} \label{prop-Pf+h-Pf-Hf}
Let the function $f$ be Lipschitz on $\overline{\Om}$ and
$h:\bdy\Om\to [-\infty,\infty]$ 
be such that $h=0$ q.e.\ on $\bdy\Om$. 
Then
both $f$ and $f+h$ are resolutive and
  \[
  P(f+h)=Pf=Hf.
\]
\end{prop}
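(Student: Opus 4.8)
The plan is to exploit the convergence theorem for obstacle problems (Theorem~\ref{cov-thm}) together with the capacity estimate from Lemma~\ref{lem-small-Cp}. Since $f$ is Lipschitz on $\clOm$, it lies in $\Hp(\Om,w)$, so $Hf$ is well-defined and continuous, and the first equality $Pf=Hf$ is essentially \cite[Theorem~9.25 and its proof]{HKM}: continuous functions are resolutive and for Lipschitz (hence Sobolev) data the Perron solution coincides with the $\A$-harmonic extension. So the real content is the perturbation statement $P(f+h)=Pf$ when $h=0$ q.e.

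First I would reduce to the case $h\ge 0$. Indeed $\uP(f+h)$ and $\lP(f+h)$ are sandwiched once we control $\uP(f+|h|)$ and $\lP(f-|h|)$, and $|h|=0$ q.e.\ as well; a symmetric argument (replacing $f$ by $-f$ and $h$ by $-h$) then handles the lower solution. So assume $h\ge 0$ with $h=0$ outside a set $E\subset\bdy\Om$ of $(p,w)$-capacity zero. By \eqref{eq-Cpw-outer} choose open sets $U_k\supset E$ with $\Cpw(U_k)<2^{-kp}$, and by passing to $\bigcap_{i\le k}U_i$ we may take them decreasing. Apply Lemma~\ref{lem-small-Cp} to obtain a decreasing sequence $\psi_j\ge 0$ with $\|\psi_j\|_{\Hp(\R^n,w)}<2^{-j}$, $\psi_j\ge m$ on $U_{j+m}$, and $\psi_j=\infty$ on $E$ (or at least on $\bigcap_k U_k\supset E$). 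Then $\psi_j\to 0$ in $\Hp(\R^n,w)$, so also in $\Hp(\Om,w)$.

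Next, for each $j$ let $u_j$ be the lsc-regularized solution of the obstacle problem for $\K_{\psi_j,\,f+\psi_j}(\Om)$ — equivalently, writing $v_j=u_j$, the solution with obstacle $\psi_j$ and boundary data matching $f+\psi_j$; since $f$ is Lipschitz we can add it as a fixed shift and keep the hypotheses of Theorem~\ref{cov-thm} by working with $u_j-Hf$ or by noting the theorem's proof goes through with a Sobolev boundary datum added. Theorem~\ref{cov-thm} gives $u_j\searrow u$ a.e.\ where $u$ solves the obstacle problem for $\K_{0,f}(\Om)$, i.e.\ $u=Hf$ after lsc-regularization (the obstacle $0$ is inactive for the $\A$-harmonic function $Hf$... here one must be slightly careful: one should instead take $\psi_j$ with a lower obstacle, or observe that $Hf$ need not dominate $0$). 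The cleaner route: use obstacle $\psi_j$ with the \emph{unconstrained} comparison — since $u_j$ is a supersolution with $u_j\ge\psi_j$ and $u_j\to Hf$, and each $u_j+\big(\text{lower barrier}\big)\in\U_{f+h}$ because $u_j\ge\psi_j\ge m$ on $U_{j+m}\supset E$ forces $\liminf_{y\to x}u_j(y)\ge h(x)$ for $x\in E$ (as $h$ is finite q.e., by shrinking we get $\liminf\ge h(x)$ eventually), while away from $E$ the boundary values of $u_j$ approach those of $f$ at regular points, and irregular points form a capacity-zero set by the Kellogg property. Hence $u_j\in\U_{f+h}$ up to an additive correction vanishing as $j\to\infty$, giving $\uP(f+h)\le\lim u_j=Hf=Pf$. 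The reverse inequality $\uP(f+h)\ge\uP f=Pf$ is immediate from $f+h\ge f$ (as $h\ge 0$) and monotonicity of $\uP$. Combined with the symmetric lower bound, $P(f+h)=Pf=Hf$ and $f+h$ is resolutive.

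The main obstacle is the boundary-behaviour bookkeeping in the middle step: showing that the (regularized) obstacle-problem solutions $u_j$, or a slight modification of them, actually belong to $\U_{f+h}$ — this requires combining (i) the blow-up $\psi_j=\infty$ on $E$ to beat the unknown, possibly infinite values of $h$ there, (ii) the Kellogg property to discard irregular points, and (iii) uniform control near the regular part of $\bdy\Om$ so that the limit is exactly $Hf$ and not something larger. Making (i)–(iii) fit together cleanly — in particular handling the case where $h$ is allowed to be $\pm\infty$ on the capacity-zero set — is where the care is needed; everything else is a routine application of the cited machinery.
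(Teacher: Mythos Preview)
Your proposal has the right skeleton --- the capacity blow-up functions $\psi_j$ from Lemma~\ref{lem-small-Cp}, obstacle problems, and the convergence Theorem~\ref{cov-thm} --- but two steps are not set up correctly, and the second is a genuine gap.

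First, the obstacle problem. You take obstacle $\psi_j$ and boundary data $f+\psi_j$, then try to apply Theorem~\ref{cov-thm}, whose hypothesis is $\K_{\psi_j,\psi_j}$ with matching obstacle and boundary data. You notice this and try several repairs, but as you yourself observe, the limit problem $\K_{0,f}$ need not have $Hf$ as its solution unless $Hf\ge0$. The clean choice (and the one the paper makes) is to take \emph{both} obstacle and boundary data equal to $f_j:=Hf+\psi_j$. Then $f_j\to Hf$ in $\Hp(\Om,w)$, and since $Hf$ is $\A$-harmonic it solves $\K_{Hf,Hf}(\Om)$, so Theorem~\ref{cov-thm} applies verbatim and gives $u_j\searrow Hf$ a.e. Moreover $u_j\ge f_j\ge Hf$ a.e.\ (hence everywhere after lsc-regularization), which is exactly what you need for the boundary behaviour at regular points.

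Second, and more seriously, your treatment of irregular boundary points does not work. Membership in $\U_{f+h}$ requires
\[
\liminf_{\Om\ni y\to x}u_j(y)\ge(f+h)(x)\quad\text{for \emph{every} }x\in\bdy\Om,
\]
not just q.e. At an irregular point $x\notin E$ one has $(f+h)(x)=f(x)$, but $Hf$ need not attain $f(x)$ there, so neither need $u_j$. Invoking the Kellogg property to ``discard'' such points, or to fix them by an ``additive correction'', is precisely what one cannot do in the Perron upper class. The remedy is to enlarge the open sets $U_k$ so that they cover $E\cup I_p$, where $I_p$ is the set of irregular points (which has $(p,w)$-capacity zero by Kellogg). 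Then every $x\in\bdy\Om\setm U_{j+m}$ is regular, giving $u_j\ge Hf\ge f(x)-\eps$ in a neighbourhood of $x$, while every $x\in U_{j+m}$ is handled by $u_j\ge\psi_j\ge m$. One first assumes $f\ge0$ (so that $Hf\ge0$ and $u_j$ is bounded below, as required for $\U_{f+h}$), obtains $\uP(f+h)\le Hf$, and removes the sign restriction by adding a constant; applying the inequality to $-f,-h$ then gives the matching lower bound. Your reduction to $h\ge0$ is harmless but orthogonal to this; the essential missing ingredients are the choice of $Hf+\psi_j$ as the obstacle and the inclusion of the irregular set in the $U_k$.
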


\begin{proof}
Since $f$ is Lipschitz and \(\Om\) bounded, we get that $f\in\Hp(\Om, w)$.
First, we assume that $f\geq0$.
Let $I_p\subset\bdy\Om$ be the set of all irregular points. 
Let $E=\{x\in\bdy\Om:h(x)\neq0\}$.
Then by the Kellogg property \cite[Theorem~8.10]{HKM}, we have $\Cpw(I_p\cup E)=0$.
Using \eqref{eq-Cpw-outer}, we can find a decreasing sequence
$\{U_k\}_{k=1}^\infty$
of bounded open sets 
in $\R^n$ such that $I_p\cup E\subset U_k$ and $\Cpw(U_k)<2^{-kp}$.
Consider the decreasing sequence of nonnegative functions $\{\psi_j\}_{j=1}^\infty$ given in Lemma~\ref{lem-small-Cp}. 

Let $u_j$ be the lsc-regularized solution of the obstacle problem
with obstacle and boundary data
$f_j=Hf+\psi_j$, see \cite[Theorems~3.21~and~3.63]{HKM}.
Let $m$ be a positive integer. 
By the comparison principle \cite[Lemma~3.18]{HKM}, we have that $Hf\geq0$ and hence by Lemma~\ref{lem-small-Cp},
\[
f_j=Hf+\psi_j\geq\psi_j\geq m\quad\text{in }U_{j+m}\cap\Om.
\]
In particular, $u_j\geq f_j\geq m$ a.e.\ in $U_{j+m}\cap\Om$ and since $u_j$ is 
lsc-regularized, we have that 
\begin{equation} \label{eq-uj-m}
u_j\geq m\quad\text{everywhere in }U_{j+m}\cap\Om.
\end{equation}
Let $\eps>0$ and $x\in\bdy\Om$ be arbitrary. 
If $x\notin U_{j+m}$, then $x$ is a regular point and thus $Hf$ is continuous at $x$. Hence, there is a neighbourhood $V_x$ of $x$ such that  
\[
Hf(y)\geq f(x)-\eps=(f+h)(x)-\eps\quad\text{for all } y\in V_x\cap\Om.
\]
As
$\psi_j\geq0$, we have that $f_j=Hf+\psi_j\geq Hf$. 
So, 
\[
u_j(y)\geq f_j(y)\geq (f+h)(x)-\eps \quad \text{for a.e.\ }y\in V_x\cap\Om.
\]
Since $u_j$ is lsc-regularized, we get 
\[
u_j(y)\geq (f+h)(x)-\eps\quad\text{for all }y\in V_x\cap \Om.
\] 
And if $x\in U_{j+m}$, we instead 
let $V_x=U_{j+m}$. Then $u_j\ge m$ in $V_x\cap\Om$ by \eqref{eq-uj-m}.

Consequently, for all $x\in\bdy\Om$, we have
\[
u_j(y)\geq\min\{(f+h)(x)-\eps,m\}\quad\text{for all }y\in V_x\cap\Om.
\]
Thus, 
\begin{equation}   \label{eq-min-f+h-m}
\liminf_{\Om\ni y\rightarrow x}u_j(y)\geq\min\{(f+h)(x)-\eps,m\}
\quad\text{for all }x\in\bdy\Om.
\end{equation}
Letting $\eps\rightarrow 0$ and $m\rightarrow\infty$ yields
\[
\liminf_{\Om\ni y\rightarrow x}u_j(y)\geq (f+h)(x)\quad\text{for all }x\in\bdy\Om.
\]

Since $u_j$ is $\A$-superharmonic and nonnegative, we conclude that $u_j\in\mathcal{U}_{f+h}(\Om)$, and thus $u_j\geq\uP (f+h)$. 
As
$Hf$ is the solution of the obstacle problem for $\K_{Hf,Hf}(\Om)$, 
we get by Theorem~\ref{cov-thm} that the sequence $u_j$ decreases a.e.\ to $Hf$ in $\Om$. 
Thus, $Hf\geq \uP(f+h)$ a.e.\ in $\Om$. 
But $Hf$ and $\uP(f+h)$ are continuous, so we have that for all Lipschitz functions $f\ge0$,
\begin{equation}	\label{Ineq-Hf-Pf+h}
Hf\ge\uP(f+h)\quad\text{everywhere in }\Om.
\end{equation} 

Next, let $f$ be an arbitrary Lipschitz function on $\overline{\Om}$.
Since $f$ is bounded, there exists a constant $c\in\R$ such that $f+c\geq0$.
By the definition of $Hf$ and of Perron solutions 
we see that
\[
    H(f+c)=Hf+c 
\quad \text{and} \quad
  \uP(f+h+c) = \uP(f+h)+c.
\]
This  together with \eqref{Ineq-Hf-Pf+h} shows that
\[
Hf =H(f+c)-c \ge \uP(f+h+c) -c = \uP (f+h),
\] 
i.e.\ \eqref{Ineq-Hf-Pf+h} holds for arbitrary Lipschitz functions $f$.
Applying it to $-f$ and $-h$ gives us that
\[
  Hf =-H(-f) \le - \uP (-f-h) = \lP (f+h).
\]
Together with the inequality $\lP (f+h) \le \uP(f+h)$, 
implied by Theorem~\ref{thm-comp-princ},  we get that
\[
    Hf \le \lP (f+h) \le \uP(f+h) \le Hf,
\]
and thus $P(f+h)=Hf$ and $f+h$ is resolutive.
Finally, letting $h=0$, it follows directly that $f$ is resolutive and $Pf=Hf$.
\end{proof}

It is now possible to extend the resolutivity results to continuous functions. This gives us an alternative way of solving the Dirichlet problem with prescribed continuous boundary data.

\begin{thm}		\label{thm-Pf+h-Pf}
Let $f\in C(\bdy\Om)$ and $h:\bdy\Om\rightarrow [-\infty,\infty]$ be such that
$h=0$ q.e.\ on $\bdy\Om$. Then both $f$ and $f+h$ are resolutive and
$P(f+h)=Pf$.
\end{thm}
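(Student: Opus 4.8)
The plan is to reduce the general continuous case to the Lipschitz case already handled in Proposition~\ref{prop-Pf+h-Pf-Hf} by a uniform-approximation argument, using the scaling behaviour of Perron solutions under addition of constants together with the comparison principle (Theorem~\ref{thm-comp-princ}). First I would note the elementary but essential fact that the Perron operators are monotone and translate under constants: if $f_1 \le f_2$ on $\bdy\Om$ then $\uP f_1 \le \uP f_2$ and $\lP f_1 \le \lP f_2$, and $\uP(f+c) = \uP f + c$, $\lP(f+c) = \lP f + c$ for $c \in \R$; moreover if $\|f_1 - f_2\|_{\infty} \le \eps$ then $\uP f_1 - \eps \le \uP f_2 \le \uP f_1 + \eps$, and likewise for $\lP$. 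These follow directly from Definition~\ref{Perron-defi} since adding a constant to every member of $\U_f$ produces $\U_{f+c}$, and $\U_{f_1}\subset\U_{f_2}$ when $f_1\le f_2$.

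Next, given $f \in C(\bdy\Om)$, I would extend $f$ to a function in $C(\clOm)$ (Tietze) and then, by density of Lipschitz functions in $C(\clOm)$ with respect to the sup-norm (e.g.\ via mollification or the fact that Lipschitz functions are uniformly dense in $C$ on a compact set), choose Lipschitz functions $f_i$ on $\clOm$ with $\|f_i - f\|_{C(\bdy\Om)} \le 2^{-i}$. Then $f_i - 2^{-i} \le f \le f_i + 2^{-i}$ on $\bdy\Om$, and also $f_i - 2^{-i} \le f + h' \le f_i + 2^{-i}$ on $\bdy\Om \setminus E$, where $E = \{x : h(x)\ne 0\}$ has $\Cpw(E)=0$. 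The key move is: since $h_i := (f - f_i) + h$ satisfies $h_i = f - f_i$ q.e.\ on $\bdy\Om$, and $f - f_i$ is (the restriction of) a Lipschitz function, one cannot directly apply Proposition~\ref{prop-Pf+h-Pf-Hf} to $h_i$ because $h_i$ is not q.e.\ zero. Instead I would apply the proposition to the Lipschitz function $f_i$ together with the perturbation $h$ (which \emph{is} q.e.\ zero): Proposition~\ref{prop-Pf+h-Pf-Hf} gives that $f_i$ and $f_i + h$ are resolutive with $P(f_i + h) = Pf_i = Hf_i$.

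Now I would combine the estimates. From $f_i - 2^{-i} \le f + h \le f_i + 2^{-i}$ q.e.\ on $\bdy\Om$ — more precisely, this holds on $\bdy\Om\setminus E$, and on $E$ the perturbation is irrelevant in the sense that it can be absorbed by the same reasoning as in the proof of Proposition~\ref{prop-Pf+h-Pf-Hf} — one wants $\uP(f+h)$ and $\lP(f+h)$ both trapped between $Hf_i - 2^{-i}$ and $Hf_i + 2^{-i}$. The cleanest route: write $f + h = (f_i + h) + g_i$ where $g_i = f - f_i$ satisfies $\|g_i\|_\infty \le 2^{-i}$ as a function on $\bdy\Om$; using monotonicity and constant-translation of $\uP$ applied to the decomposition $f_i + h - 2^{-i} \le f + h \le f_i + h + 2^{-i}$, and the fact that $f_i + h$ is resolutive with $P(f_i+h) = Hf_i$, deduce
\[
Hf_i - 2^{-i} \le \lP(f+h) \le \uP(f+h) \le Hf_i + 2^{-i}.
\]
Hence $\uP(f+h) - \lP(f+h) \le 2^{1-i}$ for all $i$, so $\uP(f+h) = \lP(f+h)$, i.e.\ $f + h$ is resolutive (the common value is $\A$-harmonic since it is a decreasing-then-increasing limit squeezed between $\A$-harmonic functions $Hf_i \pm 2^{-i}$, or simply because $\uP f$ is always $\A$-harmonic or $\pm\infty$ and boundedness rules out $\pm\infty$). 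Taking $h \equiv 0$ gives resolutivity of $f$ with $Pf = \lim_i Hf_i$, and the same squeeze with the same $Hf_i$ bounds for both $f$ and $f+h$ forces $P(f+h) = Pf$.

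The main obstacle I anticipate is handling the perturbation set $E$ cleanly: the inequality $f + h = f + h$ fails to be controlled by $f_i \pm 2^{-i}$ on $E$ since $h$ may be $\pm\infty$ there, so one cannot literally write $f_i - 2^{-i} \le f + h \le f_i + 2^{-i}$ on all of $\bdy\Om$. The fix is to route everything through the already-established resolutivity of $f_i + h$ (Proposition~\ref{prop-Pf+h-Pf-Hf}) rather than through $f_i$ alone: the perturbation $h$ has been dealt with there, and what remains, $g_i = f - f_i$, is genuinely bounded by $2^{-i}$ everywhere on $\bdy\Om$, so the naive sup-norm estimate for $\uP$, $\lP$ applies without any capacity subtlety. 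A secondary point needing a line of care is that $\U_{f+h}$ and $\U_{(f_i+h)+g_i}$ relate correctly under the pointwise bound $g_i \ge -2^{-i}$; this is immediate from Definition~\ref{Perron-defi} once one observes that if $u \in \U_{f_i+h}$ then $u + 2^{-i} \in \U_{f+h}$, giving $\uP(f+h) \le \uP(f_i+h) + 2^{-i}$, and symmetrically for the lower bound and for $\lP$.
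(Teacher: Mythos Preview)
Your proposal is correct and follows essentially the same route as the paper: uniformly approximate $f$ by Lipschitz $f_i$, apply Proposition~\ref{prop-Pf+h-Pf-Hf} to the pair $(f_i,h)$ so that the perturbation is absorbed at the Lipschitz level, and then squeeze $\lP(f+h)$ and $\uP(f+h)$ between $P(f_i+h)\pm 2^{-i}=Pf_i\pm 2^{-i}$ using the everywhere-bounded remainder $g_i=f-f_i$. The paper's write-up is terser and skips the Tietze extension (working directly with Lipschitz approximants on $\bdy\Om$), but the logical structure and the key observation---that only the genuinely uniformly small part $g_i$ needs the naive sup-norm estimate, while $h$ has already been handled---are identical.
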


\begin{proof}
Since continuous functions can be approximated uniformly by Lipschitz functions, we have that there exists a sequence $\{f_k\}_{k=1}^\infty$ of Lipschitz functions such that 
\begin{equation} \label{eq-fk-f}
f_k-2^{-k}\le f\le f_k+2^{-k} \quad\text{on } \bdy\Om.
\end{equation}
From Definition~\ref{Perron-defi} it follows that
\[
\uP f_k-2^{-k}\le\uP f\le\uP f_k+2^{-k} \quad\text{in }\Om,
\]
i.e.\  
the functions $\uP f_k$ converge uniformly to $\uP f$ in $\Om$ as $k\to\infty$.
Using \eqref{eq-fk-f}, we also obtain similar inequalities for $\lP f,\uP(f+h)$ and $\lP(f+h)$ in terms of $\lP f_k, \ \uP(f_k+h)$ and $\lP(f_k+h)$, respectively.
By Proposition~\ref{prop-Pf+h-Pf-Hf}, we have that $f_k$ and $f_k+h$ are resolutive and moreover $P(f_k+h)=Pf_k$.
Using the resolutivity of $f_k+h$, we have 
\[
\uP(f+h)-2^{-k}\le\uP(f_k+h)=\lP(f_k+h)\le\lP(f+h)+2^{-k}\quad\text{in }\Om,
\]
from which it follows that
\[
0\le\uP(f+h)-\lP(f+h)\le 2^{1-k}\quad\text{in }\Om.
\]
Letting $k\to\infty$ shows that $f+h$ is resolutive. In the same way, $f$ is resolutive.
Next, we have from \eqref{eq-fk-f} that
\[
P(f+h)-2^{-k}\le P(f_k+h)=Pf_k\le Pf+2^{-k} \quad\text{in }\Om,
\]
from which we get
\[
P(f+h)-P f\le 2^{1-k}.
\]
Similarly,
\[
Pf-P(f+h)\le 2^{1-k}.
\]
Letting $k\to\infty$ shows that $P(f+h)=Pf$.
\end{proof}

If $u$ is a bounded $\A$-harmonic function in $\Om$
such that 
\[
f(x)=\lim_{\Om\ni y\rightarrow x}u(y) \quad\text{for all }x\in\bdy\Om,
\]
then 
$u \in \U_f \cap \LL_f$.
Thus, 
\[
u\le \lP f \le \uP f \le u,
\]
and so $f$ is resolutive and $u=Pf$, see \cite[p.\ 169]{HKM}.
Using Theorem~\ref{thm-Pf+h-Pf}, we 
can now generalize this fact and deduce
the following uniqueness result.

\begin{cor}		\label{cor-unique}
Let $f\in C(\bdy\Om)$. Assume that $u$ is bounded and $\A$-harmonic in $\Om$ 
and that there is a set $E\subset\bdy\Om$ with $\Cpw(E)=0$ such that
\[
\lim_{\Om\ni y\to x}u(y)=f(x) \quad\text{for all }x\in\bdy\Om\setm E.
\]
Then $u=Pf$ in $\Om$.
\end{cor}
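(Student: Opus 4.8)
The plan is to absorb the discrepancy between the boundary values of $u$ and $f$ into a perturbation that vanishes quasieverywhere, and then quote Theorem~\ref{thm-Pf+h-Pf}. Set $M=\sup_\Om|u|$, which is finite since $u$ is bounded, and define for $x\in\bdy\Om$
\[
   h(x)=\liminf_{\Om\ni y\to x}u(y)-f(x)
   \quad\text{and}\quad
   h'(x)=\limsup_{\Om\ni y\to x}u(y)-f(x).
\]
Because $f\in C(\bdy\Om)$ and $\bdy\Om$ is compact, $f$ is bounded on $\bdy\Om$, so both $h$ and $h'$ are real-valued. By hypothesis $\lim_{\Om\ni y\to x}u(y)=f(x)$ exists for every $x\in\bdy\Om\setm E$, hence $h(x)=h'(x)=0$ there; since $\Cpw(E)=0$, this means $h=0$ and $h'=0$ q.e.\ on $\bdy\Om$.

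Next I would check that $u\in\U_{f+h}$. Indeed, $u$ is $\A$-superharmonic in $\Om$ (being $\A$-harmonic) and bounded from below by $-M$, and by the very definition of $h$,
\[
   \liminf_{\Om\ni y\to x}u(y)=f(x)+h(x)=(f+h)(x)
   \quad\text{for all }x\in\bdy\Om.
\]
Thus $u\ge\uP(f+h)$ in $\Om$. Since $f\in C(\bdy\Om)$ and $h=0$ q.e.\ on $\bdy\Om$, Theorem~\ref{thm-Pf+h-Pf} gives that $f+h$ is resolutive with $\uP(f+h)=P(f+h)=Pf$, and therefore $u\ge Pf$ in $\Om$. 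Symmetrically, $u$ is $\A$-subharmonic and bounded from above by $M$, and $\limsup_{\Om\ni y\to x}u(y)=(f+h')(x)$ for every $x\in\bdy\Om$, so $u\in\LL_{f+h'}$ and hence $u\le\lP(f+h')$; applying Theorem~\ref{thm-Pf+h-Pf} to the perturbation $h'$ yields $\lP(f+h')=P(f+h')=Pf$. Combining the two inequalities gives $u=Pf$ in $\Om$.

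I do not expect a genuine obstacle: once Theorem~\ref{thm-Pf+h-Pf} is available, the only points to verify are that $h$ and $h'$ are honest (finite-valued) functions that vanish q.e.\ on $\bdy\Om$ --- which is where boundedness of $u$ and of $f$ enter --- and that $u$ lies in the Perron classes $\U_{f+h}$ and $\LL_{f+h'}$, which is immediate from the definitions. Note that two perturbations are needed rather than one, because the limit of $u$ at points of $E$ need not exist, so $\liminf$ and $\limsup$ of $u$ may disagree on $E$; each of them, however, still differs from $f$ only on the capacity-zero set $E$, which is all that Theorem~\ref{thm-Pf+h-Pf} requires.
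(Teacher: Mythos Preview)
Your proof is correct and follows essentially the same route as the paper: sandwich $u$ between Perron solutions of perturbations of $f$ on a capacity-zero set and invoke Theorem~\ref{thm-Pf+h-Pf}. The only cosmetic difference is that the paper first normalizes so that $0\le f,u\le1$ and then uses the simpler perturbations $\pm\chi_E$ (which dominate the boundary discrepancy after normalization), whereas you work directly with $h=\liminf u-f$ and $h'=\limsup u-f$; both choices do the job.
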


\begin{proof}
Add a sufficiently large constant to both $f$ and $u$, and then
rescale the new values of $f$ and $u$ so that $0\le f\le 1$ and $0\le u\le1$. 
Since $u$ is bounded and $\A$-harmonic in $\Om$, we have that $u\in\LL_{f+\chi_E}$ and $u\in\U_{f-\chi_E}$.
Thus, by Theorem~\ref{thm-Pf+h-Pf}, we get that
\[
u\le\lP(f+\chi_E)=Pf=\uP(f-\chi_E)\le u\quad\text{in }\Om. \qedhere
\]
\end{proof}

\begin{remark}
The word \emph{bounded} is essential for the above uniqueness result to hold. Otherwise it fails. For instance, the Poison kernel 
\[
\frac{1-|z|^2}{|1-z|^2}											
\]
with a pole at $1$ is a harmonic function in the unit disc
$B(0,1)\subset\C =\R^2$,  which is zero on $\bdy B(0,1)\setm\{1\}$.
\end{remark}

The Kellogg property
\cite[Theorem~9.11]{HKM}
together with Corollary~\ref{cor-unique}, yields the following uniqueness result.

\begin{thm}\label{thm-unique}
Let $f\in C(\bdy\Om)$. Then there exists a unique bounded
$\A$-harmonic function $u$ in $\Om$ such that
\begin{equation} \label{eq-unique}
\lim_{\Om\ni y\to x}u(y)=f(x) \quad\text{for q.e.\ }x\in\bdy\Om,
\end{equation}
moreover $u=Pf$.
\end{thm}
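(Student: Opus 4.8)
The plan is to take $u=Pf$ for existence and to invoke Corollary~\ref{cor-unique} for uniqueness; essentially all the work has already been done, so this theorem is really a repackaging of Theorem~\ref{thm-Pf+h-Pf} (through Corollary~\ref{cor-unique}) together with the Kellogg property.

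\emph{Existence.} First I would check that $Pf$ is a legitimate candidate. Since $f\in C(\bdy\Om)$, it is resolutive (by Theorem~\ref{thm-Pf+h-Pf} with $h\equiv0$, or by \cite[Theorem~9.25]{HKM}), so $Pf=\uP f=\lP f$ is $\A$-harmonic in $\Om$. Boundedness follows by comparison with constants: as $\bdy\Om$ is compact, $m:=\inf_{\bdy\Om}f$ and $M:=\sup_{\bdy\Om}f$ are finite, the constant function $M$ lies in $\U_f$ and the constant function $m$ lies in $\LL_f$, whence $m\le\lP f\le\uP f\le M$ in $\Om$, i.e.\ $m\le Pf\le M$. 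For the boundary behaviour, by the definition of a regular boundary point we have $\lim_{\Om\ni y\to x}Pf(y)=f(x)$ for every regular $x\in\bdy\Om$, and by the Kellogg property \cite[Theorem~9.11]{HKM} the set $I_p$ of irregular points satisfies $\Cpw(I_p)=0$. Hence \eqref{eq-unique} holds with $u=Pf$.

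\emph{Uniqueness.} Suppose $u$ is any bounded $\A$-harmonic function in $\Om$ satisfying \eqref{eq-unique}, and let $E\subset\bdy\Om$ with $\Cpw(E)=0$ be the exceptional set, so that $\lim_{\Om\ni y\to x}u(y)=f(x)$ for all $x\in\bdy\Om\setm E$. Then $u$ meets exactly the hypotheses of Corollary~\ref{cor-unique}, and therefore $u=Pf$ in $\Om$. Since $Pf$ itself satisfies \eqref{eq-unique} by the existence part, this proves that the bounded $\A$-harmonic function solving the q.e.\ boundary value problem is unique and equals $Pf$.

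I do not expect any genuine obstacle here. The only points needing a line of care are the boundedness of $Pf$ (comparison with constant barriers) and reading the phrase ``q.e.'' in \eqref{eq-unique} as ``outside a set of $(p,w)$-capacity zero'', which is precisely the form in which Corollary~\ref{cor-unique} applies.
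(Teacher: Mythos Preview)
Your proposal is correct and follows essentially the same approach as the paper: existence via the Kellogg property applied to $Pf$, and uniqueness via Corollary~\ref{cor-unique}. The paper's proof is terser and does not spell out the boundedness of $Pf$ or the resolutivity step, but your added detail on those points is sound and the overall structure matches.
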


\begin{proof}
By the Kellogg property \cite[Theorem~9.11]{HKM} and Theorem~\ref{thm-Pf+h-Pf}, 
we have that $u=Pf$ satisfies \eqref{eq-unique}. 
On the other hand, if $u$ satisfies \eqref{eq-unique}, then Corollary~\ref{cor-unique} 
shows that $u=Pf$.
\end{proof}

\section{Quasicontinuous functions}
One of the useful properties of the Sobolev space $\Hp(\Om,w)$ is that every function 
in $\Hp(\Om,w)$ has a $(p,w)$-quasicontinuous representative which is unique 
upto sets of $(p,w)$-capacity zero, see 
\cite[Theorem~4.4]{HKM}.

\begin{defi} \label{def-qc}
A function \(v:\Om\to[-\infty,\infty]\) is \((p,w)\)-quasicontinuous in $\Om$ 
if for every $\eps>0$ there is an open set $G$ such that \(C_{p,w}(G)<\eps\) 
and the restriction of $v$ to $\Om\setm G$ is finite  valued and continuous.
\end{defi}

It follows from the outer regularity \eqref{eq-Cpw-outer} of
  $\Cpw$ that if $v$ is quasicontinuous and $\vb=v$ q.e.\ then $\vb$
  is also quasicontinuous.

Refining the techniques in Section~\ref{sect-Perron-res},
  we can obtain the following result.

\begin{thm} \label{thm-Sob-Pf=Hf}
Let $f:\R^n\to [-\infty,\infty]$ be a $(p,w)$-quasicontinuous function in $\R^n$ 
such that $f\in\Hp(\Om,w)$. 
Let $h:\bdy\Om\to [-\infty,\infty]$ be such that $h=0$ q.e.\ on $\bdy\Om$.
Then $f+h$ and $f$ are resolutive and $P(f+h)=Pf=Hf$.
\end{thm}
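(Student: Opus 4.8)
The plan is to reduce the quasicontinuous case to the Lipschitz case already handled in Proposition~\ref{prop-Pf+h-Pf-Hf}, using a quasicontinuity approximation to replace the uniform (Lipschitz) approximation used in the proof of Theorem~\ref{thm-Pf+h-Pf}. Since $f$ is $(p,w)$-quasicontinuous on $\R^n$ and $f\in\Hp(\Om,w)$, standard approximation in weighted Sobolev spaces (see \cite[Chapter~4]{HKM}) gives a sequence of Lipschitz (or at least continuous $\Hp$) functions $f_k$ with $f_k\to f$ in $\Hp(\R^n,w)$, and—after passing to a subsequence—$f_k\to f$ pointwise q.e.\ in $\R^n$, in fact with $f_k\to f$ quasiuniformly: for each $\eps>0$ there is an open $G$ with $\Cpw(G)<\eps$ and $f_k\to f$ uniformly on $\R^n\setm G$. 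Replacing $f_k$ by $f_k + |f_k-f|$-type corrections one can further arrange $\|f_k-f_{k+1}\|_{\Hp(\R^n,w)}<2^{-kp}$, which is exactly the hypothesis needed to feed the capacity sets $U_k=\{|f_k-f|>\text{something}\}$ into Lemma~\ref{lem-small-Cp}.

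The key steps, in order, are as follows. First, since each $f_k$ is Lipschitz on $\clOm$ (or continuous in $\Hp$, handled as in Theorem~\ref{thm-Pf+h-Pf}), Proposition~\ref{prop-Pf+h-Pf-Hf} gives $P(f_k+h)=Pf_k=Hf_k$. Second, $Hf_k\to Hf$: because $f_k-f\to0$ in $\Hp(\Om,w)$ and hence $f_k-f\to0$ in $\Hp_0(\Om,w)$ modulo adjusting boundary values, the continuous dependence of the $\A$-harmonic extension on its boundary data (via the obstacle problem with $\psi\equiv-\infty$, using the a.e.\ uniqueness in \cite[Theorem~3.21]{HKM} and the energy estimates there) yields $Hf_k\to Hf$ in $\Hp(\Om,w)$, and then locally uniformly in $\Om$ by the interior Hölder/Harnack estimates \cite[Theorems~3.70 and~6.6]{HKM}. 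Third—and this is where the quasicontinuity is genuinely used rather than mere $\Hp$-convergence—one must show that the common boundary behaviour is not destroyed in the limit: concretely, that $Hf\in\U_{f+h-\eps}$ and $Hf\in\LL_{f+h+\eps}$ up to sets of capacity zero, for every $\eps>0$. Here one repeats the barrier construction from the proof of Proposition~\ref{prop-Pf+h-Pf-Hf}: let $E=\{x\in\bdy\Om:h(x)\neq0\}\cup\{x\in\bdy\Om:f\text{ is not quasicontinuous-good at }x\}\cup I_p$, which has $\Cpw(E)=0$ by the Kellogg property and quasicontinuity; cover $E$ by a decreasing sequence of open sets $U_k$ with $\Cpw(U_k)<2^{-kp}$; apply Lemma~\ref{lem-small-Cp} to get $\psi_j$; and let $u_j$ be the lsc-regularized solution of the obstacle problem with obstacle and boundary data $Hf+\psi_j$. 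Exactly as in \eqref{eq-uj-m}--\eqref{eq-min-f+h-m}, using that at regular points $x\notin U_{j+m}$ both $Hf$ attains the correct boundary value (this needs a quasicontinuity argument: near such $x$, $f$ and its $\Hp$-representative agree well enough that $Hf\to f(x)$) and $\psi_j\ge0$, one concludes $\liminf_{y\to x}u_j(y)\ge(f+h)(x)$ for all $x\in\bdy\Om$, so $u_j\in\U_{f+h}$, hence $u_j\ge\uP(f+h)$; and by Theorem~\ref{cov-thm}, $u_j\downarrow Hf$ a.e., giving $Hf\ge\uP(f+h)$ a.e., hence everywhere by continuity. Applying the same to $-f$, $-h$ gives $Hf\le\lP(f+h)$, and with Theorem~\ref{thm-comp-princ} we get $Hf=\lP(f+h)=\uP(f+h)=P(f+h)$; taking $h\equiv0$ yields $Pf=Hf$ as well.

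The main obstacle is the third step: unlike in Proposition~\ref{prop-Pf+h-Pf-Hf}, where $f$ was genuinely continuous on $\clOm$ so that at a regular boundary point the target value $f(x)$ was unambiguous and $Hf$ visibly converged to it, here $f$ is only quasicontinuous and defined q.e., so one must argue that for $x$ outside an exceptional set of capacity zero the Sobolev solution $Hf$ still has boundary limit $f(x)$. This is essentially the statement that quasicontinuous Sobolev boundary data are attained q.e.\ by the $\A$-harmonic extension; it can be extracted by combining the Kellogg property with a fine-continuity/quasicontinuity argument (the trace of $Hf-f\in\Hp_0(\Om,w)$ vanishes q.e.\ on $\bdy\Om$ in the appropriate fine sense), and then absorbing the bad set into $E$. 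Once that is in place, the rest is a routine rerun of the barrier machinery of Section~\ref{sect-Perron-res}, and the $\eps\to0$, $m\to\infty$ limits go through verbatim.
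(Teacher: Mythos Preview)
Your barrier framework in the ``third step'' is the right skeleton, and it is indeed what the paper runs. The genuine gap is the sentence you flag yourself: at a point $x\in\bdy\Om\setminus U_{j+m}$ you assert that $Hf(y)\to f(x)$ as $\Om\ni y\to x$, justifying this by $x$ being regular. But (Sobolev) regularity is a statement about continuous boundary data; it does not immediately give $\lim_{y\to x}Hf(y)=f(x)$ for merely quasicontinuous $f$, and indeed Example~\ref{ex-qcont} shows this limit can fail on a set of positive capacity. Your proposed rescue via ``the trace of $Hf-f\in\Hp_0(\Om,w)$ vanishes q.e.'' concerns the values of a quasicontinuous representative on $\bdy\Om$, not the radial limits of $Hf$ from inside $\Om$; it does not by itself yield what you need. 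Likewise, the Lipschitz approximation in your first two steps is a detour: even with $Hf_k\to Hf$ locally uniformly, you have no mechanism for passing $P(f_k+h)\to P(f+h)$ without uniform closeness of $f_k$ to $f$ on $\bdy\Om$, which quasicontinuity does not provide.

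The paper resolves this not through regularity at all, but by proving first (Lemma~\ref{lem-Hf-f-qc}) that the function $u$ equal to $Hf$ in $\Om$ and to $f$ on $\R^n\setminus\Om$ is itself $(p,w)$-quasicontinuous on $\R^n$. One then chooses the open sets $U_k$ so that this \emph{extended} function is continuous on $\R^n\setminus U_k$ (and so that $h=0$ there). For $x\in\bdy\Om\setminus U_{j+m}$ this gives directly $Hf(y)=u(y)\ge f(x)-\eps$ for $y\in(V_x\cap\Om)\setminus U_{j+m}$, while on $U_{j+m}\cap\Om$ one has $u_j\ge m$ as before; combining the two regions and using that $u_j$ is lsc-regularized gives $u_j\in\U_{f+h}$. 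No appeal to the Kellogg property or to regular boundary points is needed. A second point you have not addressed: for general $f\in\Hp(\Om,w)$ one cannot add a constant to make $f\ge0$; the paper handles this via $\max\{f,k\}$ and the monotone convergence $H\max\{f,k\}\to Hf$ established in Lemma~\ref{lem-Hfj-Hf}.
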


In $f+h$ we can interpret $\pm\infty\mp\infty$ arbitrarily
  in $[-\infty,\infty]$.
Before the proof of Theorem~\ref{thm-Sob-Pf=Hf}, we give the following
two lemmas which may be of independent interest.

\begin{lem}  \label{lem-Hf-f-qc}
Let $f$ be as in Theorem~\ref{thm-Sob-Pf=Hf}. Then its $\A$-harmonic extension $Hf$, 
extended by $f$ outside $\Om$, is $(p,w)$-quasicontinuous in $\R^n$.
\end{lem}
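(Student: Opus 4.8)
The plan is to glue the quasicontinuity of $Hf$ inside $\Om$ to the quasicontinuity of $f$ outside $\clOm$, and then handle $\bdy\Om$ using a capacity argument exactly as in the proof of Proposition~\ref{prop-Pf+h-Pf-Hf}. First I would recall that $Hf \in \Hp(\Om,w)$ is $\A$-harmonic, hence continuous in $\Om$ by \cite[Theorem~3.70]{HKM}; in particular its restriction to $\Om$ is trivially quasicontinuous there. Likewise, $f$ restricted to $\R^n\setm\clOm$ is quasicontinuous (it is quasicontinuous on all of $\R^n$ by hypothesis). The issue is therefore entirely concentrated on $\bdy\Om$: I must produce, for each $\eps>0$, a single open set $G\subset\R^n$ with $\Cpw(G)<\eps$ off which the glued function $g:=Hf\chi_\Om + f\chi_{\R^n\setm\Om}$ is finite and continuous, and the only place continuity can fail is at boundary points.

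The key step is to show that $Hf$ attains the boundary value $f$ continuously at q.e.\ point of $\bdy\Om$. Since $f$ is quasicontinuous, for each $\eps>0$ there is an open $G_1$ with $\Cpw(G_1)<\eps/3$ such that $f|_{\R^n\setm G_1}$ is finite and continuous. Let $E\subset\bdy\Om$ be the (capacity-zero) set of points where $Hf$ fails to have boundary limit $f$; such a set exists because $f-f\in\Hp_0(\Om,w)$ allows one to run the argument of Proposition~\ref{prop-Pf+h-Pf-Hf} — more precisely, one uses that for Sobolev-type boundary data the $\A$-harmonic extension has limit $f$ at every regular point where $f$ is "finely" continuous, and the exceptional set has $\Cpw$ zero by the Kellogg property \cite[Theorem~9.11]{HKM} together with the quasicontinuity of $f$. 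Here the genuinely technical point — and the main obstacle — is establishing this q.e.\ boundary attainment $\lim_{\Om\ni y\to x}Hf(y)=f(x)$ for quasicontinuous Sobolev data, rather than merely continuous data; I expect to mimic the $\{U_k\}$, $\{\psi_j\}$ construction from Lemma~\ref{lem-small-Cp} and Proposition~\ref{prop-Pf+h-Pf-Hf}, sandwiching $Hf$ between obstacle-problem solutions with obstacles $Hf\pm\psi_j$ built from a quasicontinuous approximation of $f$, and letting $\eps\to0$, $m\to\infty$.

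Granting that, I would take $G_0\supset E$ open with $\Cpw(G_0)<\eps/3$, using outer regularity \eqref{eq-Cpw-outer}. Then on $\R^n\setm(G_0\cup G_1)$ the function $g$ is finite, and it is continuous: at interior points of $\Om$ by continuity of $Hf$; at points of $\R^n\setm\clOm$ by continuity of $f|_{\R^n\setm G_1}$; and at points $x\in\bdy\Om\setm(G_0\cup G_1)$ because there $Hf(y)\to f(x)$ from inside $\Om$ while $f(y)\to f(x)$ from outside (continuity of $f$ off $G_1$), so the two pieces match. One small extra set $G_2$ with $\Cpw(G_2)<\eps/3$ may be needed to also discard the capacity-zero set where $f$ itself is infinite, if that has not already been absorbed; then $G=G_0\cup G_1\cup G_2$ works with $\Cpw(G)<\eps$ by subadditivity. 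Since $Hf$ extended by $f$ differs from any other such extension only on $\bdy\Om$, which has measure zero, this is the canonical extension, and the lemma follows.
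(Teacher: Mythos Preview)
Your approach has a genuine gap at its central step. You assert that the set $E\subset\bdy\Om$ of points where $Hf$ fails to have boundary limit $f$ has $(p,w)$-capacity zero, but this is false in general for quasicontinuous Sobolev boundary data: Example~\ref{ex-qcont} in the paper constructs a bounded quasicontinuous $f\in\Hp(\R^n,w)$ for which \emph{no} function $u:\Om\to\R$ satisfies $\lim_{\Om\ni y\to x}u(y)=f(x)$ for q.e.\ $x\in\bdy\Om$; in particular $Hf$ cannot. Your proposed remedy---mimicking the obstacle-problem sandwich of Proposition~\ref{prop-Pf+h-Pf-Hf}---is essentially the argument of Theorem~\ref{thm-Sob-Pf=Hf}, but that proof \emph{uses} the present lemma as its very first step (the quasicontinuity of the extended $Hf$ is what lets one choose the sets $U_k$), so invoking it here is circular.

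The paper avoids boundary behaviour altogether. Set $v:=Hf-f$ in $\Om$ and $v:=0$ in $\R^n\setm\Om$; then $v\in\Hp_0(\Om,w)$, and by \cite[Theorem~4.5]{HKM} it has a quasicontinuous representative $\vb$ on $\R^n$ with $\vb=0$ q.e.\ on $\R^n\setm\Om$. Since $Hf$ is continuous and $f$ is quasicontinuous in $\Om$, $v$ is already quasicontinuous in $\Om$, and \cite[Theorem~4.12]{HKM} forces $v=\vb$ q.e.\ in $\Om$, hence $v=\vb$ q.e.\ in $\R^n$. Outer regularity~\eqref{eq-Cpw-outer} transfers the quasicontinuity of $\vb$ to $v$, and therefore to $f+v$, which is exactly the extended $Hf$. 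No claim about boundary limits of $Hf$ is needed.
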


\begin{proof}
Define $v:=Hf-f$ and extend it by zero outside $\Om$.
Then $v\in\Hp_0(\Om,w)$.
By \cite[Theorem~4.5]{HKM}, there is a $(p,w)$-quasicontinuous function $\vb$ 
in $\R^n$ such that $\vb=v$ a.e.\ in $\Om$ and $\vb=0$ q.e.\ in the complement of $\Om$.
Recall that $Hf$ is a continuous function in $\Om$ and $f$ is assumed 
to be
$(p,w)$-quasicontinuous in $\Om$. 
This clearly means that $v$ is also $(p,w)$-quasicontinuous in $\Om$.
It then follows from \cite[Theorem~4.12]{HKM} that $v=\vb$ q.e.\ in $\Om$.
We know that $v=0$ outside the set $\Om$.
Thus, we can conclude that $\vb=v$ q.e.\ in $\R^n$.
Finally, by \eqref{eq-Cpw-outer},
since $\vb$ is $(p,w)$-quasicontinuous in $\R^n$, so is $v$ 
 and hence also $f+v$, which concludes the proof. 
\end{proof}

\begin{lem}		\label{lem-Hfj-Hf}
Let \(\{f_j\}_{j=1}^\infty\) be a decreasing sequence of functions in $\Hp(\Om, w)$ such that $f_j\to f$ in $\Hp(\Om,w)$.
Then the sequence $Hf_j$ decreases to $Hf$ in $\Om$. 
\end{lem}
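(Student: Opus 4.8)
The plan is to show that $\{Hf_j\}_{j=1}^\infty$ is decreasing, to let $u$ be its pointwise limit, and then to identify $u$ with $Hf$ by checking that $u$ is $\A$-harmonic with $u-f\in\Hp_0(\Om,w)$ and invoking the uniqueness of the $\A$-harmonic extension. Monotonicity should come from the comparison principle: since $\{f_j\}$ is decreasing and $f_j\to f$ in $\Hp(\Om,w)$, we have $f_j\ge f_{j+1}\ge f$ a.e.\ in $\Om$, and from
\[
Hf_{j+1}-Hf_j=\bigl((Hf_{j+1}-f_{j+1})-(Hf_j-f_j)\bigr)+(f_{j+1}-f_j)
\]
the bracketed term lies in $\Hp_0(\Om,w)$ while $f_{j+1}-f_j\le0$ a.e., so $(Hf_{j+1}-Hf_j)_+$ is a function in $\Hp(\Om,w)$ dominated a.e.\ by a nonnegative function in $\Hp_0(\Om,w)$ and hence itself belongs to $\Hp_0(\Om,w)$. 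Applying \cite[Lemma~3.18]{HKM} to the supersolution $Hf_j$ and the subsolution $Hf_{j+1}$ gives $Hf_{j+1}\le Hf_j$ a.e., and hence everywhere by continuity; the same argument with $f$ in place of $f_{j+1}$ gives $Hf_j\ge Hf$ for every $j$. Thus $u:=\lim_{j\to\infty}Hf_j$ exists with $Hf\le u\le Hf_1$, so $u$ is finite and locally bounded in $\Om$.

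Next I would pin down $u$. Using $Hf_j-f_j\in\Hp_0(\Om,w)$ as a test function in the equation satisfied by $Hf_j$ and combining the ellipticity bounds \eqref{ellip-conds} with H\"older's inequality yields $\|\grad Hf_j\|_{L^p(\Om,w\,dx)}\le C$ with $C$ independent of $j$; since also $Hf\le Hf_j\le Hf_1$, the sequence $\{Hf_j\}$ is bounded in $\Hp(\Om,w)$. Because $\Hp(\Om,w)$ is reflexive and $Hf_j\to u$ in $L^p(\Om,w\,dx)$ by dominated convergence, this upgrades to weak convergence $Hf_j\rightharpoonup u$ in $\Hp(\Om,w)$, and since $\Hp_0(\Om,w)$ is a closed, hence weakly closed, subspace while $f_j\to f$ in $\Hp(\Om,w)$, it follows that $u-f\in\Hp_0(\Om,w)$. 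On the other hand, the interior H\"older estimate \cite[Theorem~6.6]{HKM}, together with the local bounds $Hf\le Hf_j\le Hf_1$ and the uniform $\Hp(\Om,w)$-bound, makes $\{Hf_j\}$ locally equicontinuous, so the monotone pointwise convergence $Hf_j\to u$ is in fact locally uniform; hence $u$ is continuous and, as a locally uniform limit of solutions of \eqref{Div-A}, is itself a solution, i.e.\ $\A$-harmonic. Comparing $u$ with $Hf$ by \cite[Lemma~3.18]{HKM} in both directions (legitimate since $u-Hf\in\Hp_0(\Om,w)$) then gives $u=Hf$, so $Hf_j$ decreases to $Hf$ in $\Om$.

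The step I expect to be the crux is the passage to the limit just sketched: one must produce the uniform Caccioppoli-type bound on $\|Hf_j\|_{\Hp(\Om,w)}$ and then combine weak compactness in $\Hp(\Om,w)$ with the interior regularity of solutions in order to see simultaneously that the limit is $\A$-harmonic and that it retains the Sobolev boundary data $f$. By contrast, the monotonicity step and the concluding uniqueness step are routine applications of the comparison principle \cite[Lemma~3.18]{HKM}.
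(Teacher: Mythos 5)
Your proof is correct, and it is genuinely different from the paper's at the two non-trivial steps. To show that the pointwise limit $u$ is $\A$-harmonic, the paper invokes the monotone convergence theorems for super- and subsolutions, \cite[Theorems~3.75 and~3.77]{HKM}: a decreasing, locally lower-bounded sequence of supersolutions decreases to a supersolution, and dually for $-u_j$, so $u$ is simultaneously a super- and a subsolution, hence a solution. You instead extract local equicontinuity from the interior H\"older estimates and the uniform local bounds $Hf\le Hf_j\le Hf_1$, upgrade pointwise to locally uniform convergence, and use the stability of $\A$-harmonicity under locally uniform limits. Both are legitimate; the paper's route is shorter because it bypasses any regularity or compactness considerations, while yours is more self-contained in the sense that it uses only the interior estimates and the general fact that locally uniform limits of $\A$-harmonic functions are $\A$-harmonic. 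For the boundary-value identification $u-f\in\Hp_0(\Om,w)$, both proofs first obtain the uniform bound on $\|\grad Hf_j\|_{L^p(\Om,w\,dx)}$ by testing the equation for $Hf_j$ with $Hf_j-f_j$; you then pass to the limit by reflexivity, weak compactness, and weak closedness of $\Hp_0(\Om,w)$, whereas the paper appeals directly to \cite[Lemma~1.32]{HKM} (a.e.\ convergence of a bounded sequence in $\Hp_0$ implies the limit is in $\Hp_0$). These are essentially two names for the same functional-analytic step. Your more careful justification of the hypotheses of the comparison principle at the monotonicity stage, showing that $(Hf_{j+1}-Hf_j)_+\in\Hp_0(\Om,w)$, is a useful detail that the paper glosses over; the concluding double application of \cite[Lemma~3.18]{HKM} is equivalent to the paper's appeal to the uniqueness in \cite[Theorem~3.17]{HKM}.
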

\begin{proof}
	
By the comparison principle \cite[Lemma~3.18]{HKM}, we have for all $j=1,2,\dots$\,, 
\[
u_j:=Hf_j\ge Hf_{j+1}\ge\cdots\ge Hf\quad\text{in } \Om.
\]
Thus $u(x)=\lim_{j\to\infty}u_j(x)$ exists for all $x\in\Om$ and $u(x)\ge Hf(x)$.
Note that $Hf$ is continuous in $\Om$ and so the sequence $u_j$ is locally bounded from below in $\Om$.
By \cite[Theorem~3.77]{HKM}, $u$ is a supersolution in $\Om$.
Similarly, \cite[Theorem~3.75]{HKM} applied to $-u_j$ shows that $u$ is a subsolution.
Hence $u$ is a solution of \eqref{Div-A} in $\Om$,
see \cite[bottom p.~58]{HKM}.

To conclude the proof, we need to show that $u-f\in\Hp_0(\Om,w)$.
We know that $u_j-f_j\to u-f$ pointwise a.e.\ and $u_j-f_j\in\Hp_0(\Om,w)$. 
Because of \cite[Lemma~1.32]{HKM}, it is sufficient to show that $u_j-f_j$ is a bounded sequence in $\Hp(\Om,w)$.

Using the Poincar\'e inequality~\cite[(1.5)]{HKM} we have
\begin{align*}
\|u_j-f_j\|_{\Hp(\Om,w)}&\le C_{\Om}\biggl(\int_\Om|\grad u_j-\grad f_j|^p w\,dx\biggr)^{1/p}\\
&\le C_{\Om} \biggl(\int_\Om|\grad u_j|^p w\,dx\biggr)^{1/p}
+C_{\Om} \biggl(\int_\Om|\grad f_j|^p w\,dx\biggr)^{1/p},
\end{align*}
where $C_\Om$ is a constant which depends on $\Om$.
Since $u_j$ is a solution and 
$\A$ satisfies the ellipticity conditions~\eqref{ellip-conds},
testing~\eqref{Int-A} with $\phi=u_j-f_j$ yields
\[
\biggl(\int_\Om|\grad u_j|^p w\,dx\biggr)^{1/p}
\le C \biggl(\int_\Om|\grad f_j|^p w\,dx\biggr)^{1/p},
\]
where $C$ is a constant depending on
the structure constants
$\al$ and $\be$ in \eqref{ellip-conds}.
Therefore,
\[
\|u_j-f_j\|_{\Hp(\Om,w)}\le C'\biggl(\int_\Om|\grad f_j|^p w\,dx\biggr)^{1/p}
\le C'\|f_j\|_{\Hp(\Om,w)}\le M<\infty,
\]
since the sequence $\{f_j\}_{j=1}^\infty$ is bounded in $\Hp(\Om,w)$. 
This shows that $u_j-f_j$ is bounded in $\Hp(\Om,w)$.
 Consequently, by \cite[Lemma~1.32]{HKM}, 
$u-f\in\Hp_0(\Om,w)$ and $u=Hf$ by uniqueness, cf.\ \cite[Theorem~3.17]{HKM}.
\end{proof}

We now prove Theorem~\ref{thm-Sob-Pf=Hf} and refer the reader to closely look 
at the proof of Proposition~\ref{prop-Pf+h-Pf-Hf} to fill in details where needed.

\begin{proof}[Proof of Theorem~\ref{thm-Sob-Pf=Hf}]
First assume that $f\ge0$ and so $Hf\ge0$.
Define $u:=Hf$ extended by $f$ outside $\Om$.
By Lemma~\ref{lem-Hf-f-qc}, $u$ is $(p,w)$-quasicontinuous  in $\R^n$. 
Let $\{U_k\}_{k=1}^\infty$ be a decreasing sequence of bounded open sets in $\R^n$ 
such that $\Cpw(U_k)<2^{-kp}$,
$h=0$ outside $U_k$ and $u$ restricted to $\R^n\setm U_k$ 
is continuous.
Let $u_j$ be the lsc-regularized solution of the
obstacle problem with the obstacle 
and boundary data $f_j=u+\psi_j$, where $\psi_j$ are as in Lemma~\ref{lem-small-Cp}, 
$j=1,2,\cdots$\,.
As in the proof of Proposition~\ref{prop-Pf+h-Pf-Hf} we get
\begin{equation} \label{eq-uj-m*}
u_j\geq m\quad\text{everywhere in }U_{j+m}\cap\Om.
\end{equation}

Let $\eps>0$ and $x\in\bdy\Om$ be arbitrary. 
If $x\in\bdy\Om\setm U_{j+m}$, then by quasicontinuity, $u$ restricted to $\R^n\setm U_{j+m}$ is continuous at $x$. 
Thus, there is a neighbourhood $V_x$ of $x$ such that 
\[
Hf(y)=u(y)\ge u(x)-\eps=f(x)-\eps=(f+h)(x)-
\eps\quad \text{for all }y\in (V_x\cap\Om)\setm U_{j+m}.
\]
Since $\psi_j\ge0$, we get $f_j(y)\ge u(y)=Hf(y)$ and so,
\begin{equation}  \label{eq-uj-f+h}
  u_j(y)\ge f_j(y)\ge (f+h)(x)-\eps         \quad\text{for a.e.\ }y\in (V_x\cap\Om)\setm U_{j+m}.
\end{equation}
If $x\in U_{j+m}$, let $V_x=\emptyset$. Then by \eqref{eq-uj-m*} and \eqref{eq-uj-f+h}, we get for all $x\in\bdy\Om$,
\[
u_j(y)\ge\min\{(f+h)(x)-\eps,m\}\quad\text{for a.e. }y\in(V_x\cup U_{j+m})\cap\Om
\]
Since $u_j$ is lsc-regularized, we have  
\[ 
u_j(y)\ge\min\{(f+h)(x)-\eps,m\}\quad\text{for all }y\in
(V_x\cup U_{j+m})\cap\Om,
\] 
and consequently, \eqref{eq-min-f+h-m} follows.
Letting $\eps\to 0$ and $m\to\infty$, we conclude that $u_j\in\U_{f+h}(\Om)$.
Continuing as in Proposition~\ref{prop-Pf+h-Pf-Hf}, we can conclude that 
\begin{equation}	\label{Ineq-Hf-Pf+h-4}
  \uP (f+h)  \le Hf\quad\text{in }\Om
\end{equation}
holds for all quasicontinuous 
$f:\R^n\to [-\infty,\infty]$ in $\Hp(\Om,w)$
  that are nonnegative (or merely bounded form below).

Now if $f\in\Hp(\Om,w)$ is arbitrary, then by \eqref{Ineq-Hf-Pf+h-4}
together with Lemma~\ref{lem-Hfj-Hf}
we have that
\[
\uP(f+h) \le\lim_{k\to-\infty}\uP(\max\{f,k\}+h)
          \le\lim_{k\to-\infty}H\max\{f,k\}=Hf
\quad\text{q.e.\ in }\Om.
\]
Thus, \eqref{Ineq-Hf-Pf+h-4} holds for any $f\in\Hp(\Om,w)$ and applying it 
to $-f$ and $-h$
together with the inequality
$\lP(f+h)\le\uP(f+h)$,
concludes the proof.
\end{proof}

Unlike for continuous boundary data in Theorem~\ref{thm-Pf+h-Pf}, for quasicontinuous boundary data it is
in general impossible to have
$\lim_{\Om\ni y\to x}Pf(y)=f(x)$ for q.e.\ $x\in\bdy\Om$,
see Example~\ref{ex-qcont} below.
However, we get the following uniqueness result
as a consequence of 
Theorem~\ref{thm-Sob-Pf=Hf}.

\begin{cor}  \label{cor-unique-qcont}
Let $f:\R^n\to [-\infty,\infty]$ be a $(p,w)$-quasicontinuous function in $\R^n$ 
such that $f\in\Hp(\Om,w)$.
Assume that $u$ is a bounded $\A$-harmonic function in $\Om$ and that there is a set 
$E\subset\bdy\Om$ with $\Cpw(E)=0$ such that
\[
\lim_{\Om\ni y\to x}u(y)=f(x) \quad\text{for all }x\in\bdy\Om\setm E.
\]
Then $u=Pf$.
\end{cor}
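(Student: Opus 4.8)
The plan is to mimic the proof of Corollary~\ref{cor-unique}, replacing the characteristic functions $\chi_E$ with a quasicontinuous boundary perturbation and invoking Theorem~\ref{thm-Sob-Pf=Hf} in place of Theorem~\ref{thm-Pf+h-Pf}. First I would reduce to the bounded case: since $u$ is bounded, I can add a constant and rescale so that $0\le u\le 1$ in $\Om$; the function $f$ may be replaced by $\min\{\max\{f,0\},1\}=:\tilde f$, which is still $(p,w)$-quasicontinuous in $\R^n$ and lies in $\Hp(\Om,w)$, and which still agrees with the (rescaled) boundary limits of $u$ on $\bdy\Om\setm E$. Note that the rescaled $u$ is $\A$-harmonic with the same boundary limits, so we may as well assume $0 \le f \le 1$ on $\bdy\Om$ from the outset (after the reduction).

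Next I would define the perturbation $h:=\chi_E$ on $\bdy\Om$, so that $h=0$ q.e.\ on $\bdy\Om$ because $\Cpw(E)=0$. The point is that $u\in\U_{f-\chi_E}(\Om)$ and $u\in\LL_{f+\chi_E}(\Om)$: indeed $u$ is $\A$-harmonic (hence both $\A$-sub- and $\A$-superharmonic) and bounded, and for $x\in\bdy\Om\setm E$ we have $\liminf_{\Om\ni y\to x}u(y)=f(x)=(f-\chi_E)(x)$, while for $x\in E$ the inequality $\liminf_{\Om\ni y\to x}u(y)\ge 0\ge f(x)-1=(f-\chi_E)(x)$ holds trivially since $0\le u\le1$ and $0\le f\le1$; the $\LL_{f+\chi_E}$ membership is symmetric. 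Then by Theorem~\ref{thm-Sob-Pf=Hf} applied to $f$ with the perturbations $-\chi_E$ and $+\chi_E$ (both zero q.e.), we get $\uP(f-\chi_E)=Pf$ and $\lP(f+\chi_E)=Pf$, whence
\[
u\le\lP(f+\chi_E)=Pf=\uP(f-\chi_E)\le u\quad\text{in }\Om,
\]
so $u=Pf$.

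The only subtlety, and the step I would be most careful about, is checking that $f$ (or its truncation $\tilde f$) satisfies the hypotheses of Theorem~\ref{thm-Sob-Pf=Hf} after the normalization: quasicontinuity in $\R^n$ is preserved under truncation and under adding constants, and membership in $\Hp(\Om,w)$ is likewise stable, so there is no real obstacle here. One should also note that Theorem~\ref{thm-Sob-Pf=Hf} requires $f$ to be quasicontinuous on all of $\R^n$, which is part of the hypothesis of the corollary, so nothing extra is needed. Finally, undoing the affine normalization (subtracting the constant and rescaling back) gives $u=Pf$ for the original data, since both $u\mapsto au+b$ and $Pf\mapsto P(af+b)=aPf+b$ transform compatibly. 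This completes the proof.
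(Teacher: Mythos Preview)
Your approach is correct and follows the same idea as the paper's proof, namely sandwiching $u$ between the upper and lower Perron solutions of perturbations of $f$ that vanish q.e., then invoking Theorem~\ref{thm-Sob-Pf=Hf}. The paper's proof is, however, considerably shorter: instead of normalizing $u$ and truncating $f$ so that $\chi_E$ suffices as a perturbation, it uses $\pm\infty\chi_E$ directly. Since $u$ is bounded, one gets $u\in\U_{f-\infty\chi_E}\cap\LL_{f+\infty\chi_E}$ immediately (on $E$ the required inequalities are trivial because the right-hand sides are $\mp\infty$), and Theorem~\ref{thm-Sob-Pf=Hf} then yields $u\le\lP(f+\infty\chi_E)=Pf=\uP(f-\infty\chi_E)\le u$ with no reductions needed.

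There is also a small gap in your ``undoing'' step. After the affine normalization $f\mapsto af+b=:f_1$ you truncate to obtain $\tilde f=\min\{\max\{f_1,0\},1\}$, and your argument shows $u'=P\tilde f$. Undoing the affine map alone gives $u=a^{-1}(P\tilde f-b)$, not $u=Pf$; you still need $P\tilde f=Pf_1$. This does hold, since $\tilde f$ and $f_1$ agree on $\bdy\Om\setm E$ (the boundary limits of $u'$ lie in $[0,1]$ there) and hence differ only on a set of capacity zero, so another application of Theorem~\ref{thm-Sob-Pf=Hf} (with $h=\tilde f-f_1$) gives $P\tilde f=Pf_1$. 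With that extra line your proof is complete; alternatively, switching to $\pm\infty\chi_E$ as the paper does removes both the truncation and this step entirely.
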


\begin{proof}
Since
$u$ is a bounded $\A$-harmonic function in $\Om$, we have that
$u\in\LL_{f+\infty\chi_E}$ and $u\in\U_{f-\infty\chi_E}$.
Thus by Theorem~\ref{thm-Sob-Pf=Hf}, we get that
\[
u\le\lP(f+\infty\chi_E)=Pf=\uP(f-\infty\chi_E)
\le u \quad\text{in } \Om. \qedhere
\]
\end{proof}

The following example shows that
in many situations there is 
a bounded quasicontinuous function $f \in \Hp(\R^n,w)$ 
such that no function $u$ satisfies
\[
\lim_{\Om\ni y\to x}u(y)=f(x)
  \quad \text{for q.e.\ } x \in \bdy \Om.
\]
In particular it is impossible for the Perron solution $Pf$
to attain these quasicontinuous boundary data q.e.

\begin{example} \label{ex-qcont}
Assume that $\bdy \Om$ contains a dense countable sequence $\{x_j\}_{j=1}^\infty$
of points with $\Cpw(\{x_j\})=0$, $j=1,2,\ldots$\,.
As $\Om$ is bounded it follows from
\cite[Corollary~2.39 and Lemma~2.46]{HKM} that $\Cpw(\bdy \Om)>0$.
Using  \eqref{eq-Cpw-outer}, we can then find $r_j>0$ so small that
$\Cpw(B(x_j,r_j))<3^{-j} \Cpw(\bdy \Om)$, $j=1,2,\ldots$\,.

By \cite[Corollary~2.39]{HKM}, each $x_j$ has zero variational
$(p,w)$-capacity, and hence, by the definition  of
the variational capacity \cite[p.~27]{HKM} there
is $f_j \in C_0^\infty(B(x_j,r_j))$ such that $f_j(x_j)=1$
and $\|f_j\|_{\Hp(\R^n,w)} < 2^{-j}$.
Then
\[
f:=\sum_{j=1}^\infty \max\{f_j,0\} \in \Hp(\R^n,w).
\]
Since the partial sums of $f$ are continuous and coincide
  with $f$ outside the open sets $\bigcup_{j\ge k} B(x_j,r_j)$,
  $k=1,2,\ldots$\,, with arbitrarily small $(p,w)$-capacity,
we see that $f$ is quasicontinuous.
For each $j$ there is $r_j'<r_j$ such that $f_j \ge \tfrac12$ in $B(x_j,r_j')$.
Thus
\[
f\ge\tfrac12
\text{ in } G'=\bigcup_{j=1}^\infty B(x_j,r_j')
\quad \text{and} \quad 
f = 0 \text{ outside  } G=\bigcup_{j=1}^\infty B(x_j,r_j).
\]
Note that $\Cpw(G) < \sum_{j=1}^\infty 3^{-j} \Cpw(\bdy \Om) < \Cpw(\bdy \Om)$.
Also let
\[
     S=\{x \in \bdy \Om: \text{there is $r>0$ such that 
          $\Cpw(B(x,r) \cap \bdy \Om)=0$}\},
\]
which is the largest relatively open subset of $\bdy \Om$ with
$\Cpw(S)=0$.

Finally, assume that $u : \Om \to \R$ is such that
\begin{equation} \label{eq-ut}
\ut(x):=\lim_{\Om\ni y\to x}u(y)=f(x)
  \quad \text{for q.e.\ } x \in \bdy \Om.
\end{equation}
In particular $\ut\ge\tfrac12$
q.e.\ in $G' \cap \bdy \Om$, and thus in a dense
subset of $\bdy \Om \setm S$.
It follows that
\[
\limsup_{\Om\ni y\to x}u(y) \ge \tfrac12
  \quad \text{for all } x \in \bdy \Om \setm S.
\]
But this violates the assumption that $\ut(x)=f(x)=0$
q.e.\ in $\bdy \Om \setm G$, since 
$\Cpw(\bdy \Om \setm G)>0$.
Hence there is no function $u$ satisfying \eqref{eq-ut}.

Replacing $f$ by $\min\{f,1\}$ yields a similar bounded counterexample.
\end{example}

\end{document}